\documentclass[reqno]{amsart}
\usepackage{a4wide,cite}
\usepackage{amsmath,amssymb,mathrsfs}
\usepackage{xcolor}
\definecolor{revisionred}{RGB}{0,0,0}
\newif\ifshowrevisions
\showrevisionsfalse
\newenvironment{revision}
  {\begingroup\ifshowrevisions\color{revisionred}\fi}
  {\endgroup}
\newcommand{\rev}[1]{{\ifshowrevisions\color{revisionred}\fi#1}}
\newcommand{\pdfrev}[1]{{\color{black}#1}}
\usepackage{hyperref}

\hypersetup{
  colorlinks=true,
  allcolors=black,
  pdftitle={The UMD property of symmetric operator spaces}
}
\numberwithin{equation}{section}
\newtheorem{theorem}{Theorem}[section]
\newtheorem{lemma}[theorem]{Lemma}
\newtheorem{proposition}[theorem]{Proposition}
\newtheorem{corollary}[theorem]{Corollary}
\theoremstyle{remark}

\newcommand{\M}{\mathcal M}
\newcommand{\tr}{\tau}
\newcommand{\Z}{\mathbb Z}
\newcommand{\ind}{\mathbf1}
\newcommand{\norm}[1]{\left\|#1\right\|}

\newcommand{\dd}{\,\mathrm d}

\newcommand{\E}{\mathbb E}
\newcommand{\Prob}{\mathbb P}
\newcommand{\cF}{\mathcal F}
\newcommand{\rsupp}{\operatorname{r}}

\hypersetup{linkcolor=blue}

\title[UMD property of symmetric operator spaces]{The UMD property of symmetric operator spaces}

\begin{document}
\begin{abstract}
We prove that if $E$ is a UMD symmetric Banach function space on
$(0,\infty)$, then $E(\M,\tau)$ is UMD for every semifinite von Neumann
algebra $\M$ equipped with a faithful normal semifinite trace $\tau$.
This resolves in the affirmative an open problem that has circulated in the non-commutative world for more than four decades.
\end{abstract}

\author{J. Huang}
\address{Institute for Advanced Study in Mathematics of HIT,
Harbin Institute of Technology, Harbin 150001, China}
\email{jinghao.huang@hit.edu.cn}

\author{F. Sukochev}
\address{School of Mathematics and Statistics,
University of New South Wales, Sydney NSW 2052, Australia}
\email{f.sukochev@unsw.edu.au}

\thanks{This work was supported by the
  NNSF of China (Nos. 12301160, 12471134 and 12671159) and the  ARC  (DP230100434).}
\subjclass[2020]{Primary 46L52; Secondary 46B20, 60G42}
\keywords{Symmetric operator space, UMD property, noncommutative $L^p$-space,
K\"othe--Bochner space, martingale transform}
\maketitle

\section{Introduction}\label{sec:introduction}

The passage from a symmetric Banach function space $E$ to the
corresponding space $E(\M,\tau)$ of measurable operators
associated with a semifinite von Neumann algebra $\M$ equipped
with a faithful normal semifinite trace $\tau$ is a basic
construction in noncommutative integration\cite{DPS,KS,v37}. A natural question is which
geometric properties are preserved by this construction. In the present
paper we consider the UMD property, that is, the unconditionality of
martingale differences.

The question whether the UMD property of $E$ implies that of
$E(\M,\tau)$
or of the symmetrically normed ideal $C_E$ {\color{black}(in the case when $\M$ coincides with the algebras of all bounded linear operators on infinite-dimensional Hilbert space)}
has circulated for more than four decades, {\color{black}and 
several results   have been established under
the additional assumption that $E(\mathcal M,\tau)$ is UMD; see, for
example, the results on RUC-decompositions in
\cite[Proposition~2.1]{DS1997} and on Poincar\'e type
inequalities in
\cite[Theorems~1.1(4) and~6.1]{BenEfraimLustPiquard2008}.}
The second-named author learned about this question from Arazy \cite{Arazy} who investigated this problem
during the 1980s and 1990s.
Lust-Piquard and Pisier\cite{LP} stated this problem in the context of operator ideals:
\begin{quote}
     It is apparently not known whether 
``$E$ UMD'' implies ``$C_E$ UMD''.
\end{quote}
The following (semifinite) version of this question is quoted from \cite{DS1997} (see also \cite[Section~4, final paragraph]{Randrianantoanina2002}):
\begin{quote}
    On the other hand, it is not known
if $E(\M, \tau)$ has the UMD-property whenever $E$ is a UMD space, even in the case of the
unitary matrix spaces.
\end{quote}
It was subsequently
recorded by Pisier and Xu in the following form \cite[p.~1494]{PX}.
\begin{quote}
\textsc{Problem 7.8.}
Let $\M$ be a semifinite von Neumann algebra equipped with an n.s.f.\
trace $\tau$, and let $E$ be a UMD r.i.\ space on $(0,\infty)$.
Is $E(\M,\tau)$ a UMD space?
\end{quote}
Here \emph{n.s.f.} means normal, semifinite and faithful, and
\emph{r.i.} means rearrangement-invariant. In the paragraph preceding
the problem, Pisier and Xu noted that it had already circulated for
almost two decades. \pdfrev{We note that the so-called  analytic UMD property does not transfer from the commutative counterpart $E$ to its noncommutative counterpart $E(\M,\tau)$\cite{HP}. }

The first positive results concern the noncommutative $L^p$-spaces.
For $1<p<\infty$, their UMD property follows from the work of Bourgain
\cite{Bourgain} and Berkson, Gillespie and Muhly \cite{BGM}; see also
\cite[Corollary~7.7]{PX}. Real interpolation gives further examples.
Cobos \cite{Cobos} proved preservation of UMD under the real
interpolation method
and applied this result to classes of
Lorentz--Marcinkiewicz operator ideals. 

For Orlicz spaces, a positive result was obtained by Dodds and
Sukochev \cite[Corollary~1.8]{DS} (see also the historical account
in \cite{LVY}): $L^\Phi(\M,\tau)$ has the UMD-property whenever $L^\Phi$ is a reflexive Orlicz space. 
Bekjan's Orlicz martingale inequalities \cite{Bekjan} yield
another route to the conclusion for Orlicz spaces, as noted in
\cite[Remark~3.2(i)]{BC}. Bekjan and Chen
\cite[Corollary~3.2 and Remark~3.2(ii)]{BC} subsequently obtained the
UMD property of noncommutative Orlicz spaces whose lower and upper
Matuszewska--Orlicz indices lie strictly between $1$ and $\infty$.
Their argument uses interpolation of noncommutative martingale
inequalities. 

Bukhvalov \cite{Bukhvalov} studied the corresponding commutative
vector-valued theory. His Proposition~1.3 states that, for a nonzero
Banach ideal space $E$ and a nonzero Banach space $Y$, the
K\"othe--Bochner space $E(Y)$ is UMD if and only if both $E$ and $Y$
are UMD. He also proved that, if $E$ is a symmetric space on a
nonatomic probability space and interpolates between $L^{p_0}$ and
$L^{p_1}$, where $1<p_0<p_1<\infty$, uniform boundedness of martingale
transforms in the $E(Y)$-norm is equivalent to the UMD property of $Y$
\cite[Lemma~2.1]{Bukhvalov}. The first of these results is a
inheritance  principle used in the present proof.

Our main result concerns an arbitrary UMD symmetric Banach function
space $E$ and its noncommutative counterpart $E(\M,\tau)$;
see \cite[Chapter~4]{DPS} and \cite{KS,LSZ} for this construction.
For $1<u<\infty$ and a Banach space $Y$, we denote by
$\beta_{u,Y}$ the least constant in the $L^u(\Omega;Y)$
martingale transform inequality (see \eqref{eq:UMD} below).
The precise definitions are recalled in
Section~\ref{sec:preliminaries}. 

\begin{theorem}\label{thm:main}
Let $E$ be a UMD symmetric Banach function space on $(0,\infty)$.
For every semifinite von Neumann algebra $\M$ equipped with a faithful
normal semifinite trace $\tau$, the space $E(\M,\tau)$ is also UMD.
Moreover, for every $1<u<\infty$ there is a constant $C_{E,u}$ such that
\[
 \beta_{u,E(\M,\tau)}\le C_{E,u}
\]
for all such $(\M,\tau)$.
\end{theorem}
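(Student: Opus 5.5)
The plan is to run everything through interpolation from the noncommutative $L^p$-spaces, whose UMD property is classical (Bourgain, Berkson--Gillespie--Muhly, \cite[Corollary~7.7]{PX}). There are three steps.

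\emph{Step 1.} Extract quantitative information about $E$ from its UMD property. A UMD space is superreflexive. A superreflexive Banach lattice can be renormed to be $p$-convex and $q$-concave for some $1<p\le q<\infty$. For a symmetric space this forces the Boyd indices to satisfy $1<p_E\le q_E<\infty$. Fix exponents $1<p_0<p_E\le q_E<p_1<\infty$, depending only on $E$. Then the Boyd-type interpolation theorem for operators of joint weak type $(p_0,p_0)$ and $(p_1,p_1)$ applies, in its noncommutative form due to Dodds--Dodds--de Pagter. It shows that every linear map $T$ on $L^{p_0}(\mathcal N)+L^{p_1}(\mathcal N)$ that is bounded on both endpoints is bounded on $E(\mathcal N)$, with
\[
\|T\|_{E(\mathcal N)\to E(\mathcal N)}\le c_E\max\{\|T\|_{p_0},\|T\|_{p_1}\},
\]
for any semifinite $(\mathcal N,\nu)$.

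\emph{Step 2.} Pass to the tensor algebra. It suffices to treat Paley--Walsh martingales $f_n=\sum_{k\le n}d_k$ on $\Omega=\{\pm1\}^{\mathbb N}$ with values in $E(\M,\tau)$. Each such $f_n$ is a finite sum $\sum_A \mathbf 1_A\otimes x_A$, which I regard as an element of $E(\mathcal N)$ with $\mathcal N=L^\infty(\Omega)\bar\otimes\M$ and $\nu=\Prob\otimes\tau$. The sign transform $T_\varepsilon f=\sum\varepsilon_k d_k$ is then a noncommutative martingale transform for the filtration $L^\infty(\cF_n)\bar\otimes\M$. On $L^p(\mathcal N)=L^p(\Omega;L^p(\M))$ its norm is at most $\beta_{p,L^p(\M)}$, which is finite for every $1<p<\infty$. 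By Step 1, $T_\varepsilon$ is bounded on $E(\mathcal N)$ uniformly in $\varepsilon$ and in $(\M,\tau)$.

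\emph{Step 3.} Convert the $E(\mathcal N)$-bound into the Bochner bound $\beta_{u,E(\M,\tau)}\le C_{E,u}$. I expect this to be the main obstacle, because $E(L^\infty(\Omega)\bar\otimes\M)$ is not a Köthe--Bochner space $F(\Omega;E(\M))$, so Bukhvalov's Proposition~1.3 and Lemma~2.1 \cite{Bukhvalov} cannot be quoted directly. My first attempt is a noncommutative version of Bukhvalov's Lemma~2.1 argument.
\begin{itemize}
\item Replace the random signs by a tensor-extended Rademacher/Walsh structure, so that the $E(\mathcal N)$-norm of a transformed martingale controls the $E(\M)$-norm of its values after averaging. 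Here the symmetry of $E$ is used to compare distributions of $\sum\varepsilon_k d_k$ over $\Omega$ with single-fiber distributions.
\item Use the uniform bound over all $\varepsilon$, together with a good-$\lambda$ or stopping-time argument, to upgrade the inequality to a weak-type $L^1(\Omega;E(\M))$ estimate for the martingale transform.
\item Recover all $L^u$ by Burkholder's extrapolation, which shows that UMD with one exponent (or weak type) implies UMD for every $1<u<\infty$ with constants depending only on the initial one.
\end{itemize}
If this transference fails, the fallback is to work with $L^u$ from the start. I would view $L^u(\Omega;E(\M))$ as a mixed-norm space inside a symmetric space on $\mathcal N$ whose norm is built from $E$ and $L^u$, and try to verify that this mixed space still has Boyd-type interpolation with respect to $L^{p_0}(\mathcal N)$ and $L^{p_1}(\mathcal N)$ for $p_0,p_1$ chosen around both $u$ and the Boyd indices of $E$.
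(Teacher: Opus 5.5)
\textbf{There is a genuine gap.} Your Steps 1--2 are sound but essentially classical. Step 3, which you leave open, is the whole problem, and the route you set up cannot close it.

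After Step 1 the only information about $E$ you keep is $1<p_E\le q_E<\infty$. None of the ingredients of Step 3 uses the UMD property of $E$ again: not the symmetry of $E$, not the good-$\lambda$/stopping-time upgrade, not Burkholder extrapolation, and not the Boyd-interpolation fallback. Any such argument would therefore show that $E(\M,\tau)$ is UMD for every symmetric $E$ with nontrivial Boyd indices.

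Now take $\M=L^\infty(0,\infty)$ with the Lebesgue integral as trace, so that $E(\M,\tau)=E$ isometrically. Your argument would then make every symmetric space with nontrivial Boyd indices UMD, and this is false. Bourgain's examples \cite{Bourgain83} satisfy an upper $p$- and a lower $q$-estimate, so $\norm{D_n}\lesssim n^{1/p}$ and $\norm{D_{1/n}}\lesssim n^{-1/q}$, yet they are not UMD. They can be transplanted to $(0,\infty)$, e.g.\ via the extension $Z^2_E$, which contains a copy of $E$ and keeps the indices nontrivial.

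For such $E$ and commutative $\M$, your Step~2 bound on $E(L^\infty(\Omega)\bar\otimes\M)$ does hold; it is just Boyd's theorem applied to martingale transforms on $L^{p_i}(\Omega\times(0,\infty))$. Yet $\beta_{u,E}=\infty$. So no transference from the $E(\mathcal N)$-bound to the Bochner bound of the kind your bullets describe can exist.

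The mismatch is already visible on elementary tensors:
$\norm{\mathbf 1_A\otimes x}_{E(\mathcal N)}=\norm{D_{\Prob(A)}\mu(x)}_E$, whereas $\norm{\mathbf 1_A\otimes x}_{L^u(\Omega;E(\M))}=\Prob(A)^{1/u}\norm{x}_{E(\M)}$.

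The fallback fails for the same reason. $L^u(\Omega;E(\M))$ is not rearrangement invariant on $\mathcal N$, so Boyd-type interpolation does not apply to it. The same example shows it is in general not an interpolation space for $(L^{p_0}(\mathcal N),L^{p_1}(\mathcal N))$.

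\textbf{The missing idea} is to use the UMD property of $E$ itself, not just its indices. The paper does this through the Rubio de Francia--Bukhvalov theorem: $E(Y)$ is UMD whenever $E$ and $Y$ are. Hence $E(L^p(\M))\oplus_\infty E(L^q(\M))$ is UMD with constants independent of $(\M,\tau)$. The paper then exhibits $E(\M,\tau)$ as a quotient of a closed subspace $Z$ of this sum.
\begin{itemize}
\item $Z$ consists of the sequences $(z_n)$ with $\sum_n 2^{-n/\ell}\norm{z_n}_\ell\mathbf 1_{[2^n,2^{n+1})}\in E$ for $\ell=p,q$.
\item The summation map $Qz=\sum_n z_n$ is bounded into $E(\M,\tau)$ because $\mu_{2t}(Qz)\le(\mathscr H_pf_p)(t)+(\mathscr T_qf_q)(t)$. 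These Hardy-type operators are controlled by the type/cotype dilation estimates, which is where $p<r$ and $s<q$ enter.
\item Every $x$ has a lift of comparable norm, given by the spectral layers $u_n=v\,g_n(|x|)$ cut at the levels $\mu_{2^n}(x)$.
\end{itemize}
Since UMD passes to closed subspaces and quotients with control of constants, this gives the uniform bound without ever passing to a tensor algebra.
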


Below, we explain our strategy for the proof of Theorem~\ref{thm:main} by showing the $E(\M,\tau)$ is isomorphic to a quotient of a UMD space.
Using the type and cotype of
$E$, we choose $1<p<q<\infty$ as in \eqref{eq:exponents}.
Put $I_n=[2^n,2^{n+1})$ for $n\in\Z$.
For a sequence $z=(z_n)_{n\in\Z}$ with
$z_n\in L^p(\M,\tau)\cap L^q(\M,\tau)$, define the
nonnegative scalar functions
\[
   f_\ell(t)=\sum_{n\in\Z}2^{-n/\ell}
                \norm{z_n}_\ell\ind_{I_n}(t),
   \qquad t>0,\quad \ell\in\{p,q\},
\]
where $\norm{\cdot}_\ell$ is the norm on $L^\ell(\M,\tau)$.
We define $Z$ to consist of the sequences for which
$f_p,f_q\in E$, with norm
\[
   \norm{z}_Z=\max\{\norm{f_p}_E,\norm{f_q}_E\}.
\]
This is the scalar form of the norm in \eqref{eq:Znorm}.
Lemma~\ref{lem:Zumd} identifies $Z$ isometrically with a
closed subspace of the Banach space
\[
 E\bigl(L^p(\M,\tau)\bigr)\oplus_\infty
 E\bigl(L^q(\M,\tau)\bigr).
\]
Here $E(Y)$ denotes the K\"othe--Bochner space defined in
Section~\ref{sec:preliminaries}. By
\cite[Proposition~1.3]{Bukhvalov}, the two
K\"othe--Bochner spaces are UMD. Consequently $Z$ is UMD,
by stability under finite direct sums and closed subspaces;
the latter property is stated in
\cite[Proposition~1.2(1)]{Bukhvalov}.

For every $z\in Z$, the series $\sum_{n\in\Z}z_n$
converges absolutely in $L^p(\M,\tau)+L^q(\M,\tau)$.
Indeed, estimates
\eqref{eq:absolute-head}--\eqref{eq:absolute-tail} give
\[
   \sum_{n\in\Z}
      \norm{z_n}_{L^p(\M,\tau)+L^q(\M,\tau)}
   \le \sum_{n\le0}\norm{z_n}_p
      +\sum_{n>0}\norm{z_n}_q<\infty.
\]
Consequently, the series defines a bounded linear operator
\begingroup
\[
   Q:Z\longrightarrow L^p(\M,\tau)+L^q(\M,\tau),\qquad
   Qz:=\sum_{n\in\Z}z_n.
\]
\endgroup
We next show that $Q$ takes values in $E(\M,\tau)$ and is
bounded as an operator from $Z$ to $E(\M,\tau)$. For $k\in\Z$, we
split the series and estimate $\sum_{n\le k}z_n$ in $L^p(\M,\tau)$
and $\sum_{n>k}z_n$ in $L^q(\M,\tau)$.
Write $\mu_t$ for the generalized singular value function
defined in Section~\ref{sec:preliminaries}.
Applying the two estimates in \eqref{eq:sv-add}
(see \cite[Lemma~2.5(v) and Remark~3.3]{FK}) gives
\[
 \mu_{2^{k+1}}(Qz)
 \le 2^{-k/p}\sum_{n\le k}\norm{z_n}_p
    +2^{-k/q}\sum_{n>k}\norm{z_n}_q.
\]
Let $\mathscr H_p$ and $\mathscr T_q$ be the positive
operators defined in \eqref{eq:hardy}. For $t\in I_k$, the
right-hand side is exactly
$(\mathscr H_pf_p)(t)+(\mathscr T_qf_q)(t)$.
Thus monotonicity of $\mu(Qz)$ yields
\[
   \mu_{2t}(Qz)\le
   (\mathscr H_pf_p)(t)+(\mathscr T_qf_q)(t),
   \qquad\text{for almost every }t>0.
\]
The bounds $A,B$ for these two operators are given in
\eqref{eq:Hpbound}--\eqref{eq:Tqbound}.
Taking the $E$-norm, and applying the dilation
$(D_2f)(t)=f(t/2)$, we obtain
\begin{align*}
   \norm{Qz}_{E(\M,\tau)}
   &\le \norm{D_2}_{E\to E}
          \bigl(A\norm{f_p}_E+B\norm{f_q}_E\bigr)\\
   &\le C_Q\norm{z}_Z,
   \qquad C_Q:=\norm{D_2}_{E\to E}(A+B)<\infty.
\end{align*}
Therefore $Q:Z\longrightarrow E(\M,\tau)$ is a bounded linear
operator; the details are given in Proposition~\ref{prop:Q}.

To prove that the summation map $Q:Z\to E(\M,\tau)$ is
onto, fix $x\in E(\M,\tau)$. We construct $z\in Z$ with
$Qz=x$ using the spectral projections
$\ind_{(\lambda,\infty)}(|x|)$, $\lambda\ge0$.
Their traces are controlled by the singular values through
\[
   \tau\bigl(\ind_{(\mu_t(x),\infty)}(|x|)\bigr)\le t,
   \qquad t>0;
\]
see \eqref{eq:strict-support}.
Proposition~\ref{prop:lift} uses functional calculus to
construct bounded operators $z_n$ such that
$z=(z_n)\in Z$, $Qz=x$ and
$\norm{z}_Z\le L\norm{x}_{E(\M,\tau)}$, where $L$ depends
only on $E$.
Thus $Q$ induces a Banach space isomorphism
\[
  \widetilde Q:Z/\ker Q\longrightarrow E(\M,\tau),
  \qquad \widetilde Q(z+\ker Q)=Qz,
\]
with $\norm{\widetilde Q}\le C_Q$ and
$\norm{\widetilde Q^{-1}}\le L$.
The UMD property of $E(\M,\tau)$ now follows from
inheritance  under quotients
\cite[Proposition~1.2(1)]{Bukhvalov} and Banach space
isomorphisms. In Section~\ref{sec:mainproof} we give the
corresponding quantitative martingale transform estimate.

Section~\ref{sec:preliminaries} contains the notation and the facts
used in the proof. The quotient representation is established in
Section~\ref{sec:representation}. The proof of
Theorem~\ref{thm:main} is completed in Section~\ref{sec:mainproof}.
\rev{Section~\ref{sec:extensions} establishes the corresponding
results for symmetric sequence spaces and for symmetric function
spaces on $(0,1)$.}

\section{Preliminaries}\label{sec:preliminaries}

\subsection{Symmetric function spaces}
All Banach spaces in the present paper are over the complex field. Scalar functions on
$(0,\infty)$ are identified modulo equality almost everywhere with
respect to Lebesgue measure. For a measurable function $f$, we denote
by $f^*$ the decreasing rearrangement of $|f|$.
For background on symmetric function spaces and their noncommutative
counterparts, we refer to the monographs \cite{KPS} and \cite[Chapter~4]{DPS}.

A symmetric Banach function space $E$ on $(0,\infty)$ is a nonzero
Banach space of measurable functions with the following property:
if $g\in E$ and $f^*\le g^*$, then $f\in E$ and
$\norm{f}_E\le\norm{g}_E$. We use the usual Banach function-space
convention that indicators of sets of finite measure belong to $E$
and integration over such sets defines a continuous seminorm.
The associate space $E'$ consists of measurable functions $g$ for which
\[
 \norm{g}_{E'}=
 \sup_{\norm{f}_E\le1}\int_0^\infty |f(t)g(t)|\dd t<\infty.
\]
In particular,
$\int|fg|\le\norm{f}_E\norm{g}_{E'}$.

 We do not assume the Fatou property in this definition. Recall that
 this property means that $0\le f_n\uparrow f$ and
 $\sup_n\norm{f_n}_E<\infty$ imply
 $f\in E$ and $\norm{f}_E=\sup_n\norm{f_n}_E$.
A reflexive Banach function space has the Fatou property and an
order-continuous norm. These facts follow from the associate-space
duality described in \cite[Chapter~I, \S2, p.~8, and Chapter~II,
Introduction, pp.~44--45]{KPS}.
Consequently, both properties hold whenever $E$ is UMD.

\subsection{Measurable operators}
Let $\M\subset B(H)$ be a von Neumann algebra with identity $\ind$,
equipped with a faithful normal semifinite trace $\tau$.
We write $S(\M,\tau)$ for the algebra of $\tau$-measurable operators
affiliated with $\M$. Thus an element $x$ is a closed densely defined
affiliated operator such that, for every $\delta>0$, there exists a
projection $e\in\M$ with $\tau(\ind-e)<\delta$,
$eH\subset\operatorname{dom}(x)$ and $xe\in\M$.
Algebraic operations in $S(\M,\tau)$ are understood by taking closures.

The measure topology on $S(\M,\tau)$ is the Hausdorff topology with
a basis of neighbourhoods of zero given by
\[
 V(\varepsilon,\delta)=
 \{x:\text{ there is a projection }e\in\M,\ 
       \tau(\ind-e)<\delta,\ \norm{xe}_\infty<\varepsilon\},
 \qquad\varepsilon,\delta>0.
\]
Here $\norm{\cdot}_\infty$ is the operator norm on $\M$.
For $x\in S(\M,\tau)$ let $|x|=(x^*x)^{1/2}$, and denote the spectral
projection of $|x|$ corresponding to a Borel set $B$ by
$\ind_B(|x|)$. Its right support is
$\rsupp(x)=\ind_{(0,\infty)}(|x|)$. If $x=v|x|$ is the polar
decomposition, then $v^*v=\rsupp(x)$.

The distribution function and the generalized singular value
function are defined by
\[
 d_x(a)=\tau\bigl(\ind_{(a,\infty)}(|x|)\bigr),\qquad
 \mu_t(x)=\inf\{a\ge0:d_x(a)\le t\}\quad(t>0),
\]
where $\inf\varnothing=\infty$. We write $\mu(x)$ for the
right-continuous decreasing function $t\mapsto\mu_t(x)$.
For the theory of generalized singular value functions, see
\cite{FK} and \cite[Chapter~3]{DPS}.
When $\tau(\ind)<\infty$, this function is zero for
$t\ge\tau(\ind)$. A sequence $x_n$ converges to zero in measure if
and only if $\mu_t(x_n)\to0$ for every $t>0$.

For $1\le a<\infty$,
\[
 L^a(\M,\tau)=\{x\in S(\M,\tau):\tau(|x|^a)<\infty\},
 \qquad \norm{x}_a=\tau(|x|^a)^{1/a},
\]
and $L^\infty(\M,\tau)=\M$. The symmetric operator space associated
with $E$ is
\[
 E(\M,\tau)=\{x\in S(\M,\tau):\mu(x)\in E\},
 \qquad \norm{x}_{E(\M,\tau)}=\norm{\mu(x)}_E.
\]
This is a Banach space with the displayed norm by
\cite[Theorems~8.7 and 8.11]{KS}; see also \cite{DDD} for the
earlier construction and \cite[Chapter~4]{DPS} for a systematic treatment of
symmetric operator spaces. We occasionally write $E(\M)$ for
$E(\M,\tau)$.

We shall use the following singular value estimates
\cite[Lemma~2.5 and Corollary~2.8]{FK}:
\begin{equation}\label{eq:sv-add}
 \mu_{s+t}(x+y)\le\mu_s(x)+\mu_t(y),\qquad
 \mu_t(x)\le t^{-1/a}\norm{x}_a\quad(x\in L^a(\M,\tau)).
\end{equation}
Positive parts and expressions such as $\min(|x|,\lambda)$ are
defined by spectral calculus from the corresponding scalar functions.

\subsection{Bochner spaces and the UMD property}
Let $(\Omega,\cF,\Prob)$ be a probability space and let $Y$ be a
Banach space. A $Y$-valued function is strongly measurable if it is
the almost everywhere pointwise limit of finite-valued measurable
simple functions; see \cite[Section~1.1]{HNVW1}.
For $1\le u<\infty$, the Bochner space
$L^u(\Omega;Y)$ consists of equivalence classes of strongly
measurable functions $F:\Omega\to Y$ for which
\[
 \norm{F}_{L^u(\Omega;Y)}
 =\left(\int_\Omega\norm{F(\omega)}_Y^u\dd\Prob(\omega)\right)^{1/u}
 <\infty.
\]
When the probability space is fixed, we also write $L^u(Y)$ for
$L^u(\Omega;Y)$. For a sub-sigma-algebra $\mathcal G\subset\cF$, the conditional expectation
$\E(\,\cdot\,\mid\mathcal G)$ is the usual Bochner conditional
expectation. It is a contraction on these spaces; see \cite[Section~2.6]{HNVW1}.

For a finite filtration
$\cF_0\subset\cF_1\subset\cdots\subset\cF_m$, a sequence
$(d_j)_{j=1}^m$ in $L^u(\Omega;Y)$ is a martingale difference
sequence if $d_j$ is $\cF_j$-measurable and
$\E(d_j\mid\cF_{j-1})=0$. The UMD constant $\beta_{u,Y}$ is the
least constant such that
\begin{equation}\label{eq:UMD}
 \left\|\sum_{j=1}^m\varepsilon_jd_j\right\|_{L^u(\Omega;Y)}
 \le\beta_{u,Y}
 \left\|\sum_{j=1}^m d_j\right\|_{L^u(\Omega;Y)}
\end{equation}
for all probability spaces, finite filtrations, martingale difference
sequences, and deterministic signs $\varepsilon_j\in\{-1,1\}$.
The space $Y$ is UMD if $\beta_{u,Y}<\infty$ for one, equivalently
every, $u\in(1,\infty)$.

The K\"othe--Bochner space $E(Y)$ consists of equivalence classes
of strongly measurable functions $F:(0,\infty)\to Y$ such that
$\norm{F(\cdot)}_Y\in E$, with norm
\[
 \norm{F}_{E(Y)}=\bigl\|\norm{F(\cdot)}_Y\bigr\|_E.
\]
The underlying measure is Lebesgue measure. In particular,
$E(L^a(\M,\tau))$ denotes a space of operator-valued functions on
$(0,\infty)$ with this norm.

We recall the facts needed below. UMD spaces are reflexive and have
nontrivial Rademacher type and finite Rademacher cotype; see
\cite{HNVW1,HNVW2}. The UMD property passes to closed subspaces
\cite[Proposition~1.2(1), p.~752]{Bukhvalov} and finite direct sums
\cite[Section~3.1, p.~43]{Amann2009}.
Moreover, for $a,u\in(1,\infty)$,
\begin{equation}\label{eq:known-UMD}
 \beta_{u,L^a(\M,\tau)}\le b_{a,u},\qquad
 \beta_{u,E(Y)}\le c_{E,u}\beta_{u,Y},
\end{equation}
where $b_{a,u}<\infty$ is a constant that depends on $a$ and $u$ only, and $c_{E,u}<\infty$ if $E$ is UMD, which depends on $E$ and $u$ only.
The first assertion is the noncommutative $L^p$ UMD theorem
\cite{Bourgain,BGM,PX}. The second is the K\"othe--Bochner theorem
of Rubio de Francia \cite{Rubio}; see also
\cite[Proposition~1.3]{Bukhvalov}. The stated quantitative form follows
from \cite[Theorems~6.6.3--6.6.4]{Lorist}. Reflexivity of $E$
ensures the function-space hypotheses of that result.

For a compatible Banach couple $(U,V)$, the algebraic sum
$U+V$ is equipped with the norm
\[
  \norm{x}_{U+V}
  =\inf\{\norm{y}_U+\norm{z}_V:
                y\in U,\ z\in V,\ x=y+z\};
\]
see \cite[Chapter~I, \S3, p.~9, formula~(3.2)]{KPS} or \cite{DPS}.

The   direct sum $U\oplus_\infty V$ is equipped with the  norm
$\norm{(y,z)}=\max\{\norm{y}_U,\norm{z}_V\}$.

\section{A quotient representation}\label{sec:representation}
Throughout this section, we always assume that  $E$ is a UMD symmetric Banach function space
on $(0,\infty)$.
\subsection{Dilation estimates}

We use the following consequence of the geometry of UMD spaces; see
\cite[Example~7.4.8 and Proposition~7.4.12]{HNVW2}.
\begin{theorem} \label{thm:type-cotype}
Every UMD Banach space has Rademacher type $r$ for some $1<r\le2$
and Rademacher cotype $s$ for some $2\le s<\infty$.\footnote{In  \cite{Bourgain83}, 
one has constructed examples of symmetric function spaces  on $(0,1)$ which satisfy the upper $p$-estimate and the
lower $q$-estimate for $1 < p < q < \infty $, but are not (UMD)-spaces. }
\end{theorem}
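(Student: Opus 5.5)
We plan to derive Theorem~\ref{thm:type-cotype} from two standard ingredients: a UMD space contains the $\ell^1_n$ uniformly at most finitely often, and the Maurey--Pisier theorem. The paper cites \cite[Example~7.4.8 and Proposition~7.4.12]{HNVW2} for exactly these facts, so the proof is essentially a chain of known implications.

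\emph{Step 1: nontrivial type.} Let $Y$ be UMD. By \eqref{eq:UMD} with $u=2$, any martingale difference sequence in $L^2(\Omega;Y)$ is unconditional with constant $\beta_{2,Y}$. Applied to Walsh--Paley martingales, this shows that $Y$ cannot contain $\ell^1_n$ uniformly. The key example is the dyadic martingale
\[
 \prod_{j\le n}(1+r_j)=\sum_{A\subset\{1,\dots,n\}}\prod_{j\in A}r_j .
\]
Its $L^1$-norm stays equal to $1$, whereas its transforms by signs grow. A $(1+\varepsilon)$-isomorphic copy of $\ell^1_{2^n}$ in $Y$ would transfer this growth into $L^2(\Omega;Y)$, contradicting the finiteness of $\beta_{2,Y}$. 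This is the concrete content of \cite[Example~7.4.8]{HNVW2}. We then invoke Pisier's theorem that $K$-convexity, equivalently not containing $\ell^1_n$ uniformly, is the same as nontrivial type. This gives Rademacher type $r$ for some $1<r\le2$.

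\emph{Step 2: finite cotype.} The UMD property is self-dual: $Y^*$ is UMD with the same constant at the dual exponent, obtained by testing \eqref{eq:UMD} against martingales in $L^{u'}(\Omega;Y^*)$ and using that martingale transforms are self-adjoint. Step~1 then gives $Y^*$ nontrivial type. Nontrivial type of $Y^*$ forces finite cotype of $Y$, since $Y$ embeds isometrically into $Y^{**}$, and the dual of a space of type $r>1$ has cotype $r'$ (type passes to cotype under duality). Alternatively, we can argue directly: if $Y$ had no finite cotype, Maurey--Pisier would give $\ell^\infty_n$ uniformly in $Y$, hence $\ell^1_n$ uniformly in $Y$, contradicting Step~1. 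This direct route avoids duality altogether, and we will use it. In either case we get cotype $s<\infty$, and $s\ge2$ holds automatically for any infinite-dimensional space by Dvoretzky's theorem. The case $s=2$ is allowed in the statement, so nothing more needs to be checked.

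\emph{Main obstacle.} The only non-formal point is the claim in Step~1 that UMD excludes uniform $\ell^1_n$. We would present it through the Walsh martingale computation above, which is standard, or simply cite \cite[Proposition~7.4.12]{HNVW2}. Since the paper only needs the conclusion, we would cite these results rather than reprove them.
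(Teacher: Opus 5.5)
Your proposal is correct and is essentially the paper's approach: the paper gives no argument of its own and only cites \cite[Example~7.4.8 and Proposition~7.4.12]{HNVW2}, and your chain (UMD excludes uniform copies of $\ell^1_n$, then Pisier's theorem for nontrivial type and Maurey--Pisier, or duality, for finite cotype) is the standard argument behind that reference. One minor point: Dvoretzky's theorem is not needed for $s\ge2$, since cotype $s_0$ trivially implies cotype $s$ for every $s\ge s_0$ (and type $r_0$ implies type $r$ for every $r\le r_0$), so the normalization $1<r\le2\le s<\infty$ is automatic, including for finite-dimensional spaces.
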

 
Fix a Rademacher type exponent $r$ and a Rademacher cotype
exponent $s$ for $E$, with $1<r\le2\le s<\infty$, as in
Theorem~\ref{thm:type-cotype}. Let $T_r,C_s<\infty$ denote the
corresponding type and cotype constants. Thus, for every finite
family $f_1,\ldots,f_N\in E$ and independent
Rademacher variables $\rho_1,\ldots,\rho_N$, each taking the values
$\pm 1$  with equal probability, we have
\begin{align*}
 \left(\mathbb E\norm{\sum_{j=1}^N\rho_jf_j}_E^r\right)^{1/r}
 &\le T_r\left(\sum_{j=1}^N\norm{f_j}_E^r\right)^{1/r},\\
 \left(\sum_{j=1}^N\norm{f_j}_E^s\right)^{1/s}
 &\le C_s\left(\mathbb E\norm{\sum_{j=1}^N\rho_jf_j}_E^s\right)^{1/s}.
\end{align*}
For $a>0$, the dilation operator is defined by
$(D_af)(t)=f(t/a)$, $t>0$.

\begin{lemma}\label{lem:dilations}
For every integer $j\ge0$,
\[
 \norm{D_{2^j}}_{E\to E}\le T_r2^{j/r},\qquad
 \norm{D_{2^{-j}}}_{E\to E}\le C_s2^{-j/s}.
\]
\end{lemma}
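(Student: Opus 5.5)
The idea is to realise a dilated function as an average of randomly signed, disjointly supported, equimeasurable pieces. Type then bounds the norm from above, and cotype bounds it from below.

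\emph{First estimate.} Let $N=2^j$ and $f\in E$. I would split $(0,\infty)$ into the intervals $J_k=[kN^{-1}\cdot 0,\dots)$ in the natural way: for each $m\ge0$ the interval $[m,m+1)$ is divided into $N$ subintervals of length $1/N$. Let $f_i$, $i=1,\dots,N$, be the function obtained by taking the values of $D_Nf$ on the intervals $[mN+(i-1),mN+i)$, $m\ge0$, and compressing them back. It is cleaner to write this directly as
\[
 D_Nf=\sum_{i=1}^N g_i,\qquad g_i:=(D_Nf)\,\ind_{A_i},\qquad A_i:=\bigcup_{m\ge0}[mN+i-1,\,mN+i).
\]
The sets $A_i$ partition $(0,\infty)$. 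Each $g_i$ is equimeasurable with $f$: the measure-preserving map sending $[mN+i-1,mN+i)$ onto $[m,m+1)$ by translation, composed with the scaling by $N$, shows that $g_i^*=f^*$. This is the standard fact that restricting $f(\cdot/N)$ to a set of density $1/N$ gives the distribution of $f$. I expect to check it via distribution functions, but I will verify it by approximating $f$ with simple functions constant on dyadic-type intervals, where it is exact. If exactness fails for general $f$, I will instead use interleaving with averages in the measure-preserving sense: choose $A_i$ as images of $(0,\infty)$ under measure-preserving bijections, which is always possible in the nonatomic setting.

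Since the $g_i$ are disjointly supported, $|\sum\rho_ig_i|=|\sum g_i|$ for every choice of signs, so symmetry gives $\|\sum_i\rho_ig_i\|_E=\|D_Nf\|_E$ for all signs. Type $r$ then yields
\[
 \|D_Nf\|_E\le T_r\Bigl(\sum_{i=1}^N\|g_i\|_E^r\Bigr)^{1/r}=T_rN^{1/r}\|f\|_E,
\]
which is the first bound.

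\emph{Second estimate.} Set $h=D_{2^{-j}}f$ and $N=2^j$, so that $f=D_Nh$. Apply the same decomposition to $h$, namely $D_Nh=\sum_{i=1}^N g_i$ with $g_i^*=h^*$. Cotype $s$ together with sign-invariance gives
\[
 N^{1/s}\|h\|_E=\Bigl(\sum_i\|g_i\|_E^s\Bigr)^{1/s}\le C_s\|D_Nh\|_E=C_s\|f\|_E.
\]
Hence $\|D_{2^{-j}}f\|_E\le C_s2^{-j/s}\|f\|_E$.

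\emph{Main obstacle.} The only delicate step is the exact rearrangement identity $\bigl((D_Nf)\ind_{A_i}\bigr)^*=f^*$. My approach is as follows. Choose a measure-preserving isomorphism $\sigma_i$ of $(0,\infty)$ onto $A_i$ (after rescaling by $N$), and set $g_i=f\circ\sigma_i^{-1}$ on $A_i$. This sidesteps the question entirely: the equality $\sum_i g_i=D_Nf$ in distribution is all that is needed, because the norm of $E$ depends only on $f^*$. I therefore need only $(\sum_i g_i)^*=(D_Nf)^*$, and this holds because the distribution functions add: each $g_i$ has distribution $d_f$, so $\sum_i g_i$ has distribution $N d_f = d_{D_Nf}$. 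With this formulation the Fatou property is not needed at all.
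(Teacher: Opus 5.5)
\textbf{Verdict.} The paper states this lemma without proof, so there is no written argument to compare against. The argument in your final paragraph is the standard disjointification proof, and it is correct. With $\sigma_i$ the piecewise translation of $[m,m+1)$ onto $[mN+i-1,mN+i)$ and $g_i:=f\circ\sigma_i^{-1}$ on $A_i$ (zero elsewhere), the function $G=\sum_i g_i$ satisfies $d_G=N\,d_f=d_{D_Nf}$. Hence $G^*=(D_Nf)^*$. Since $|\sum_i\rho_ig_i|=|G|$ for every choice of signs, type and cotype give exactly the two bounds. This also shows $D_{2^j}f\in E$, because $G\in E$, and, as you say, no Fatou property is needed.

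\textbf{Corrections.}

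\emph{The first paragraph is false and should be deleted, not verified.} The literal restriction $(D_Nf)\ind_{A_i}$ is not equimeasurable with $f$ in general. For $N=2$ and $f=\ind_{(0,1/2)}$ you get $D_2f=\ind_{(0,1)}$, $g_1=\ind_{(0,1)}$ and $g_2=0$, so $g_1^*\ne f^*$. No choice of partition can repair this for all $f$. Taking $f=\ind_B$, the requirement becomes $|A_i\cap NB|=|B|$ for every $B$ of finite measure. That would force $N\ind_{A_i}(N\,\cdot)=1$ almost everywhere, which is impossible for $N\ge2$. The \emph{standard fact} you quote is therefore wrong, and so is the justification via translation composed with scaling, since that composite is not measure preserving. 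The step-function fallback could be made to work, but only by adding a density argument and a priori boundedness of $D_N$ on $E$; your final construction makes all of that unnecessary. Likewise, drop the phrase \emph{after rescaling by $N$} in the last paragraph. The maps $\sigma_i$ involve no rescaling; the factor $N$ enters only through the number of pieces. Write the proof with the $\sigma_i$ construction from the start.

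\emph{In the cotype step you must know that $h=D_{2^{-j}}f\in E$.} The cotype inequality is stated only for elements of $E$, so this is needed before you can apply it to the $g_i$. It follows at once from $(D_{2^{-j}}f)^*=f^*(2^j\,\cdot)\le f^*$, because $f^*$ is decreasing.
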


Let $\varphi(t)=\norm{\ind_{(0,t)}}_E$ for $t>0$, and put
$c_0=\varphi(1)>0$. Applying Lemma~\ref{lem:dilations} to indicator
functions yields
\begin{equation}\label{eq:fundamental}
 \varphi(2^n)\ge
 \begin{cases}
 c_0T_r^{-1}2^{n/r},&n\le0,\\
 c_0C_s^{-1}2^{n/s},&n\ge0.
 \end{cases}
\end{equation}
In particular, $\varphi(t)\to\infty$ as $t\to\infty$.
Let $h\in E$ be nonnegative and decreasing, and fix $t>0$.
Then
\[
 0\le h(t)\ind_{(0,t)}(v)\le h(v)
 \qquad\text{for almost every }v>0.
\]
The ideal property of $E$ therefore gives
$h(t)\varphi(t)\le\norm{h}_E$.
It follows that $h(t)\to0$ as $t\to\infty$.
In particular, $\mu_t(x)\to0$ as $t\to\infty$ for every
$x\in E(\M,\tau)$.

We now choose and fix real numbers $p,q$ such that
\begin{equation}\label{eq:exponents}
 1<p<r\le2\le s<q<\infty.
\end{equation}
For every $f\in E$, define
\begin{equation}\label{eq:hardy}
 \mathscr H_pf=\sum_{j=0}^\infty 2^{-j/p}D_{2^j}f,
 \qquad
 \mathscr T_qf=\sum_{j=1}^\infty 2^{j/q}D_{2^{-j}}f.
\end{equation}
Lemma~\ref{lem:dilations} shows that both series converge in operator
norm on $E$, and that
\begin{align}
 \norm{\mathscr H_p}_{E\to E}
 &\le A:=T_r\sum_{j=0}^\infty2^{-j(1/p-1/r)}<\infty,
 \label{eq:Hpbound}\\
 \norm{\mathscr T_q}_{E\to E}
 &\le B:=C_s\sum_{j=1}^\infty2^{-j(1/s-1/q)}<\infty.
 \label{eq:Tqbound}
\end{align}
For each $0\le f\in E$, the two series in \eqref{eq:hardy}
converge almost everywhere to $\mathscr H_p f$ and
$\mathscr T_q f$, respectively.

\subsection{The representation space $E(X_p)\oplus_\infty E(X_q)$}

Let $p,q$ be as in \eqref{eq:exponents}, and write
$X_p=L^p(\M,\tau)$ and $X_q=L^q(\M,\tau)$.
These spaces form a compatible Banach couple in the space
$S(\M,\tau)$ of $\tau$-measurable operators, equipped with the
measure topology; see \cite[Chapter 7]{DPS}.
For each $n\in\Z$, put $I_n=[2^n,2^{n+1})$, so that $|I_n|=2^n$.
Define $Z$ to be the vector space of sequences
$z=(z_n)_{n\in\Z}$ with $z_n\in X_p\cap X_q$ for which
\begin{align*}
 F_p(z)(\cdot)&=\sum_{n\in\Z}2^{-n/p}z_n\ind_{I_n}(\cdot)
       \quad\hbox{belongs to }E(X_p),\\
 F_q(z)(\cdot)&=\sum_{n\in\Z}2^{-n/q}z_n\ind_{I_n}(t\cdot)
       \quad\hbox{belongs to }E(X_q).
\end{align*}
Equip $Z$ with a norm\footnote{Homogeneity and the triangle inequality follow from
the linearity of $F_p,F_q$ and the corresponding properties of the
two K\"othe--Bochner norms; If $\norm{z}_Z=0$, then
$F_p(z)=0$ almost everywhere; since $I_n$ has positive measure,
$z_n=0$ for every $n\in\Z$.}
\begin{equation}\label{eq:Znorm}
 \norm{z}_Z=\max\left\{\norm{F_p(z)}_{E(X_p)},
                         \norm{F_q(z)}_{E(X_q)} \right\}.
\end{equation}
Both functions $F_p(z),F_q(z)$ are strongly measurable. Indeed,
for either $\ell=p$ or $\ell=q$, the finite-valued measurable functions
\[
 F_{\ell,N}(z)(t)
 =\sum_{n=-N}^{N}2^{-n/\ell}z_n\ind_{I_n}(t)
\]
converge in $X_\ell$ to $F_\ell(z)(t)$ for every $t>0$,
i.e., $F_\ell(z)$ is  strong measurable 
in the sense of  \cite[Section~1.1]{HNVW1}. 

\begin{lemma}\label{lem:Zumd}
$Z$ is a Banach space and the map
$z\mapsto(F_p(z),F_q(z))$ is an isometric closed embedding into
$E(X_p)\oplus_\infty E(X_q)$. Consequently, $Z$ is UMD.
\end{lemma}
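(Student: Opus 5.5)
The map $J:z\mapsto(F_p(z),F_q(z))$ is linear, and by the definition \eqref{eq:Znorm} it is isometric from $Z$ into $E(X_p)\oplus_\infty E(X_q)$. The real content is therefore the closedness of the range; completeness of $Z$ and the UMD property then follow formally.

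\textbf{Step 1: describe the range.} I claim that $J(Z)$ is exactly the set of pairs $(G,H)$ with $G\in E(X_p)$ and $H\in E(X_q)$ satisfying two conditions. First, $G$ and $H$ are (a.e.) constant on each $I_n$, say $G=g_n$ and $H=h_n$ there. Second, $2^{n/p}g_n=2^{n/q}h_n$ for every $n$, where the equality is taken in $S(\M,\tau)$; in particular this common operator lies in $X_p\cap X_q$. Given such a pair, set $z_n:=2^{n/p}g_n$. Then $z=(z_n)\in Z$ and $Jz=(G,H)$. The converse inclusion is immediate from the definitions.

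\textbf{Step 2: the range is closed.} Let $J(z^{(k)})\to(G,H)$ in the direct sum. Since indicators of sets of finite measure lie in $E$ and integration over such sets is a continuous seminorm on $E$, we have
\[
 \int_{I_n}\norm{F_p(z^{(k)})(t)-G(t)}_p\dd t\to0 .
\]
The functions $F_p(z^{(k)})$ are constant on $I_n$ with value $2^{-n/p}z^{(k)}_n$. Hence $2^{-n/p}z^{(k)}_n$ is Cauchy in $X_p$, and $G$ coincides a.e. on $I_n$ with its $X_p$-limit $g_n$. In the same way, $2^{-n/q}z^{(k)}_n$ converges in $X_q$ to some $h_n$, and $H=h_n$ a.e. on $I_n$. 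Both $L^p(\M,\tau)$- and $L^q(\M,\tau)$-convergence imply convergence in measure, and the measure topology is Hausdorff. The sequence $z^{(k)}_n$ therefore has a single limit $w_n$ in measure, and
\[
 w_n=2^{n/p}g_n=2^{n/q}h_n\in X_p\cap X_q .
\]
This is where the compatibility of the couple $(X_p,X_q)$ inside $S(\M,\tau)$ is essential; it is the one point I would check carefully. By Step 1, $(G,H)\in J(Z)$, so the range is closed.

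\textbf{Step 3: completeness and UMD.} The direct sum $E(X_p)\oplus_\infty E(X_q)$ is a Banach space, since each Köthe--Bochner space is complete (standard; $E$ has the Fatou property and is complete). A closed subspace of a Banach space is complete, and $Z$ is isometric to $J(Z)$, so $Z$ is a Banach space. For the UMD property, $X_p$ and $X_q$ are UMD by \eqref{eq:known-UMD}. Bukhvalov's Proposition~1.3 (equivalently the second estimate in \eqref{eq:known-UMD}) then shows that $E(X_p)$ and $E(X_q)$ are UMD. Their finite $\oplus_\infty$ direct sum is UMD. The closed subspace $J(Z)$ is UMD by \cite[Proposition~1.2(1)]{Bukhvalov}, and hence so is its isometric copy $Z$. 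Quantitatively, $\beta_{u,Z}$ is bounded by a constant depending only on $E$, $u$, $p$ and $q$.
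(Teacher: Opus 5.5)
Your proof is correct and takes essentially the same route as the paper. The isometry comes from the definition. The range is closed because, on each $I_n$, you identify the $X_p$-limit of $2^{-n/p}z_n^{(k)}$ and the $X_q$-limit of $2^{-n/q}z_n^{(k)}$ and match them in a Hausdorff ambient topology. Completeness and the UMD property then follow from the K\"othe--Bochner, direct-sum and closed-subspace results.

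The differences are minor. The paper first uses the averaging operators $P_n$ to show that the limit is constant on each $I_n$, and matches the two limits in $X_p+X_q$. You get the Cauchy property directly from continuity of $f\mapsto\int_{I_n}|f|$ on $E$ and match the limits in the measure topology, which avoids Bochner integrals.
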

\begin{proof}
Since $\norm{\cdot}_Z$ is a norm
 (see \eqref{eq:Znorm}), it suffices to prove that it is complete.

We first prove that the range of $z\mapsto(F_p(z),F_q(z))$ is closed.
For a Banach space $Y$ and $n\in\Z$, define
\[
 P_nF
 =\left(2^{-n}\int_{I_n}F(v)\dd v\right)\ind_{I_n},
 \qquad F\in E(Y).
\]
By the definition of the K\"othe--Bochner norm and
the homogeneity of the norm in $Y$,
\[
  \norm{P_nF}_{E(Y)}
  =2^{-n}\norm{\ind_{I_n}}_E
    \norm{\int_{I_n}F(v)\,\dd v}_Y.
\]
Since $E$ is symmetric and $|I_n|=2^n$, we have
$\norm{\ind_{I_n}}_E=\varphi(2^n)$.
We have 
\[
\begin{aligned}
  \norm{P_nF}_{E(Y)}
\stackrel{\tiny \mbox{triangle~inequality}}{\le} 2^{-n}\varphi(2^n)
       \int_{I_n}\norm{F(v)}_Y\,\dd v \stackrel{\tiny \mbox{H\"older's~inequality}}{\le} 2^{-n}\varphi(2^n)
       \norm{\ind_{I_n}}_{E'}\norm{F}_{E(Y)}.
\end{aligned}
\]
Thus $P_n$ is bounded. 

Suppose that
$(F_p(z^{(k)}),F_q(z^{(k)}))\to(G_p,G_q)$ in the direct sum.
For each $n\in\Z$ and $\ell\in\{p,q\}$ we have
$P_nF_\ell(z^{(k)})=\ind_{I_n}F_\ell(z^{(k)})$.
Passing to the limit in $E(X_\ell)$ yields
$P_nG_\ell=\ind_{I_n}G_\ell$.
Consequently $G_\ell$ is constant almost everywhere on $I_n$,
taking the value
\[
 c_{\ell,n}=2^{-n}\int_{I_n}G_\ell(v)\dd v\in X_\ell.
\]

  Since
\[
F_\ell(z^{(k)})(v)=2^{-n/\ell}z_n^{(k)}
\qquad (v\in I_n)
\]
and $|I_n|=2^n$, it follows that 
\[
2^{-n/\ell}z_n^{(k)}-c_{\ell,n}
=
2^{-n}\int_{I_n}
\bigl(F_\ell(z^{(k)})(v)-G_\ell(v)\bigr)\,\mathrm dv.
\]
Consequently,
\begin{eqnarray*}
\left\|2^{-n/\ell}z_n^{(k)}-c_{\ell,n}\right\|_{X_\ell}
&\stackrel{\tiny \mbox{triangle~inequality}}{\le}& 2^{-n}\int_{I_n}
\left\|F_\ell(z^{(k)})(v)-G_\ell(v)\right\|_{X_\ell}
\,\mathrm dv\\
&\stackrel{\tiny \mbox{H\"older's~inequality}}{\le}& 2^{-n}\|\mathbf1_{I_n}\|_{E'}
\left\|F_\ell(z^{(k)})-G_\ell\right\|_{E(X_\ell)}
\longrightarrow 0
\end{eqnarray*}
as $k \to \infty $. 
In other words,  $z_n^{(k)}$ converges in $X_p$ to $2^{n/p}c_{p,n}$
and in $X_q$ to $2^{n/q}c_{q,n}$.
The embeddings of $X_p$ and $X_q$ into $X_p+X_q$ are continuous,
so uniqueness of the limit in $X_p+X_q$ gives
\[
 z_n:=2^{n/p}c_{p,n}=2^{n/q}c_{q,n}\in X_p\cap X_q.
\]
Let $z=(z_n)_{n\in\Z}$, $z_n \in X_p\cap X_q $. 
Since the intervals  $I_n$ form a countable partition of $(0,\infty)$, it follows from the definitions of $F_p $ and $F_q$ that 
$G_p=F_p(z)$ and $G_q=F_q(z)$ almost everywhere.
Thus $z\in Z$ with $\norm{z}_Z=\max \left\{\norm{G_p}_{E(X_p)} , \norm{G_q}_{E(X_q)} \right\}$.
Since $(G_p,G_q)=(F_p(z),F_q(z))$, the image of the
isometric map $z\mapsto(F_p(z),F_q(z))$ is closed. 
Since $Z$ is isometric to this image, $Z$ is
also complete.

By \eqref{eq:known-UMD}, both $E(X_p)$ and $E(X_q)$ are UMD.
Their finite direct sum is UMD \cite[Section~3.1, p.~43]{Amann2009}.
Since the UMD property passes to closed subspaces
\cite[Proposition~1.2(1), p.~752]{Bukhvalov},
it follows that $Z$ is UMD.
\end{proof}

\subsection{The summation map $Q:Z\longrightarrow X_p+X_q$}\label{sub  Q}

For $z=(z_n)_{n\in \mathbb{Z}}\in Z$ let
\begin{align}\label{an bn}
 a_n=2^{-n/p}\norm{z_n}_p,\qquad
 b_n=2^{-n/q}\norm{z_n}_q,
\end{align}
and put $f_p=\sum_{n\in\Z}a_n\ind_{I_n}$ and
$f_q=\sum_{n\in\Z}b_n\ind_{I_n}$. Then
\[
 \norm{f_p}_E=\norm{F_p(z)}_{E(X_p)},\qquad
 \norm{f_q}_E=\norm{F_q(z)}_{E(X_q)}.
\]
We have 
\[
 a_n\varphi(2^n)\le\norm{f_p}_E,
 \qquad b_n\varphi(2^n)\le\norm{f_q}_E.
\]
Recall from \eqref{eq:exponents} that $p<r$ and $s<q$.
Combining this with \eqref{eq:fundamental} yields
\begin{align}
 \sum_{n\le0}\norm{z_n}_p
 &\le\frac{T_r}{c_0}\norm{f_p}_E
       \sum_{n\le0}2^{n(1/p-1/r)}<\infty,
 \label{eq:absolute-head}\\
 \sum_{n>0}\norm{z_n}_q
 &\le\frac{C_s}{c_0}\norm{f_q}_E
       \sum_{n>0}2^{n(1/q-1/s)}<\infty.
 \label{eq:absolute-tail}
\end{align}
Since $\norm{z_n}_{X_p+X_q}\le
\min(\norm{z_n}_p,\norm{z_n}_q)$, the series is absolutely
convergent in $X_p+X_q$. We can define the bounded linear map
\[
 Q:Z\longrightarrow X_p+X_q,\qquad Qz=\sum_{n\in\Z}z_n.
\]
{\color{black}
Eventually, we show that the quotient mapping $Z/\ker( Q)$ determines a Banach space isomorphism with $E(\M,\tau)$ thus establishing that the latter is a UMD-space. We start working towards this aim.}
\begin{proposition}\label{prop:Q}
The range of $Q$ lies in $E(\M,\tr)$, and
\begin{equation}\label{eq:Qbound}
 \norm{Qz}_{E(\M)}
 \le\norm{D_2}_{E\to E}(A+B)\norm{z}_Z.
\end{equation}
Here $A$ and $B$ are the constants defined in
\eqref{eq:Hpbound} and \eqref{eq:Tqbound}, respectively.
\end{proposition}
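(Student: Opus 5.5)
The approach is to dominate $\mu(Qz)$ pointwise by $\mathscr H_pf_p+\mathscr T_qf_q$, up to a dilation by $2$, and then take $E$-norms using \eqref{eq:Hpbound}--\eqref{eq:Tqbound}. Fix $z\in Z$ and $k\in\Z$, and split $Qz=y_k+w_k$ with $y_k=\sum_{n\le k}z_n$ and $w_k=\sum_{n>k}z_n$.

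First check that the pieces lie in the right spaces. By \eqref{eq:absolute-head}, $\sum_{n\le 0}\norm{z_n}_p<\infty$, and only finitely many $n\in(0,k]$ remain; each of these $z_n$ is in $X_p$. So $y_k$ converges absolutely in $X_p$. Symmetrically, \eqref{eq:absolute-tail} gives that $w_k$ converges absolutely in $X_q$. Both series also converge in $X_p+X_q$ to the same limits, so $Qz=y_k+w_k$ holds in $S(\M,\tau)$.

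Next apply \eqref{eq:sv-add} with $s=t=2^k$:
\[
\mu_{2^{k+1}}(Qz)\le \mu_{2^k}(y_k)+\mu_{2^k}(w_k)\le 2^{-k/p}\sum_{n\le k}\norm{z_n}_p+2^{-k/q}\sum_{n>k}\norm{z_n}_q .
\]
Rewrite this using $\norm{z_n}_p=2^{n/p}a_n$ with $n=k-j$:
\[
2^{-k/p}\sum_{n\le k}\norm{z_n}_p=\sum_{j\ge0}2^{-j/p}a_{k-j}.
\]
For $t\in I_k$ we have $(D_{2^j}f_p)(t)=f_p(t/2^j)=a_{k-j}$, so this sum equals $(\mathscr H_pf_p)(t)$. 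Likewise, with $n=k+j$ for $j\ge1$ and $(D_{2^{-j}}f_q)(t)=b_{k+j}$,
\[
2^{-k/q}\sum_{n>k}\norm{z_n}_q=\sum_{j\ge1}2^{j/q}b_{k+j}=(\mathscr T_qf_q)(t).
\]
Here the series in \eqref{eq:hardy} converge almost everywhere for nonnegative $f$, as noted there. For $t\in I_k$ we have $2t\ge 2^{k+1}$, and $\mu(Qz)$ is decreasing, so $\mu_{2t}(Qz)\le(\mathscr H_pf_p)(t)+(\mathscr T_qf_q)(t)$ for almost every $t>0$.

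Finally, $\mu_t(Qz)=(D_2\mu_{2\cdot}(Qz))(t)$. The ideal property of $E$ together with positivity of $D_2$ gives
\[
\norm{\mu(Qz)}_E\le\norm{D_2}\bigl(\norm{\mathscr H_pf_p}_E+\norm{\mathscr T_qf_q}_E\bigr)\le\norm{D_2}(A+B)\norm{z}_Z,
\]
using $\norm{f_p}_E,\norm{f_q}_E\le\norm{z}_Z$. This shows $\mu(Qz)\in E$, hence $Qz\in E(\M,\tau)$, and yields \eqref{eq:Qbound}.

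The step needing the most care is the bookkeeping matching the sums to $\mathscr H_p,\mathscr T_q$ on each $I_k$. In particular, the a.e. pointwise values of the operator series must coincide with the norm limits in $E$; this holds because the partial sums are increasing and nonnegative. $D_2$ is bounded on $E$ by Lemma~\ref{lem:dilations}.
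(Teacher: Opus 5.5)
Your proposal is correct and follows the paper's own proof essentially step for step. The common steps are the split $Qz=y_k+w_k$ at the dyadic level $k$ with $t\in I_k$, the estimate \eqref{eq:sv-add} with $s=t=2^k$, the identification of the two sums with $(\mathscr H_pf_p)(t)$ and $(\mathscr T_qf_q)(t)$, monotonicity of $\mu(Qz)$, and finally the dilation $D_2$ combined with \eqref{eq:Hpbound}--\eqref{eq:Tqbound}. Your extra remarks fill in details the paper leaves implicit: $Qz=y_k+w_k$ holds because both series also converge in $X_p+X_q$, and the a.e.\ pointwise sums agree with the $E$-norm limits because the partial sums are increasing.
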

\begin{proof}
Fix $t>0$, and let $k\in\Z$ be the unique integer satisfying
$2^k\le t<2^{k+1}$. 
Since each $z_n$ belongs to $X_p\cap X_q$,  it follows from 
\eqref{eq:absolute-head}--\eqref{eq:absolute-tail}  that 
\[
 \sum_{n\le k}\|z_n\|_{X_p}<\infty,
 \qquad
 \sum_{n>k}\|z_n\|_{X_q}<\infty.
\]
We use the following 
decomposition 
\[
 Qz=y_k+w_k,\qquad
 y_k=\sum_{n\le k}z_n\in X_p,\qquad
 w_k=\sum_{n>k}z_n\in X_q.
\]
By \eqref{eq:sv-add}, we have 
\begin{align*}
 \mu_{2^{k+1}}(Qz)
 &\le \mu_{2^k}(y_k)+\mu_{2^k}(w_k)\\
 &\le 2^{-k/p}\sum_{n\le k}\norm{z_n}_p
       +2^{-k/q}\sum_{n>k}\norm{z_n}_q\\
 &\stackrel{\eqref{an bn}}{=}\sum_{j\ge0}2^{-j/p}a_{k-j}
       +\sum_{j\ge1}2^{j/q}b_{k+j}\\
       & \stackrel{\eqref{eq:hardy}}{=}(\mathscr H_pf_p)(t)+(\mathscr T_qf_q)(t),
\end{align*}
where the last equality uses  
$t/2^j\in I_{k-j}$ and $2^jt\in I_{k+j}$.
Since $2t\ge2^{k+1}$ and the singular-value function is decreasing, it follows that 
\begin{equation}\label{eq:pointwiseQ}
 \mu_{2t}(Qz)\le(\mathscr H_pf_p)(t)+(\mathscr T_qf_q)(t) \in E 
 \quad\hbox{for almost every }t>0.
\end{equation}
Applying $D_2$ and using
\eqref{eq:Hpbound}--\eqref{eq:Tqbound}, we obtain
\[
 \norm{Qz}_{E(\M)}
 \le\norm{D_2}_{E\to E}
       \bigl(A\norm{f_p}_E+B\norm{f_q}_E\bigr),
\]
which completes the proof.
\end{proof}

\subsection{The preimage of $Q$ with respect to $E(\M,\tau)$}

We shall use the following consequence of the definition of $\mu_t(x)$:
for $x\in S(\M,\tau)$ and $t>0$ with $\mu_t(x)<\infty$,
\begin{equation}\label{eq:strict-support}
 \tau\bigl(\ind_{(\mu_t(x),\infty)}(|x|)\bigr)\le t.
\end{equation}
{\color{black}To see \eqref{eq:strict-support}}, put $\alpha_j=\mu_t(x)+j^{-1}$ for $j\ge1$.
The definition of 
$d_x$ implies
$d_x(\alpha_j)\le t$.
The projections $\ind_{(\alpha_j,\infty)}(|x|)$ increase to
$\ind_{(\mu_t(x),\infty)}(|x|)$, so normality of $\tau$ yields
\eqref{eq:strict-support}.

\begin{proposition}\label{prop:lift}
Let $p,q$ be the exponents fixed in \eqref{eq:exponents}, and let
$Z$ and $Q$ be as in Lemma~\ref{lem:Zumd} and
Proposition~\ref{prop:Q}. {\color{black}For every $x\in E(\M,\tau)$, there exists
$u=(u_n)_{n\in\Z}\in Z$ such that
\[
 Qu=x,\qquad
 \norm{u}_Z\le 2^{1/p}\norm{D_2}_{E\to E}\norm{x}_{E(\M,\tau)}.
\]
In particular, $Q$ is surjective, and, for every $z\in Z$,
\begin{equation}\label{eq:distance-kernel}
 \operatorname{dist}_Z(z,\ker Q)
 :=\inf_{w\in\ker Q}\norm{z-w}_Z
 \le 2^{1/p}\norm{D_2}_{E\to E}\norm{Qz}_{E(\M,\tau)}.
\end{equation}}
\end{proposition}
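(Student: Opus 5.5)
The plan is to slice $|x|$ at the dyadic singular value levels $\lambda_n:=\mu_{2^n}(x)$, $n\in\Z$, and assemble the $u_n$ from the polar decomposition $x=v|x|$. Since $\mu(x)\in E$ is decreasing and finite a.e., every $\lambda_n$ is finite. The sequence $(\lambda_n)$ decreases in $n$. By the observation after \eqref{eq:fundamental}, $\lambda_n\to0$ as $n\to\infty$. I would define
\[
 u_n:=v\bigl(\min(|x|,\lambda_n)-\min(|x|,\lambda_{n+1})\bigr),\qquad n\in\Z,
\]
by spectral calculus. Thus $u_n=v\,g_n(|x|)$, where $g_n(\lambda)=\min(\lambda,\lambda_n)-\min(\lambda,\lambda_{n+1})$ satisfies $0\le g_n\le\lambda_n-\lambda_{n+1}\le\lambda_n$ and vanishes on $[0,\lambda_{n+1}]$.

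First, each $u_n$ lies in $X_p\cap X_q$, with a sharp norm bound. By the above, $|u_n|\le\lambda_n\,\ind_{(\lambda_{n+1},\infty)}(|x|)$. By \eqref{eq:strict-support} with $t=2^{n+1}$, this projection has trace at most $2^{n+1}$. Hence $u_n\in\M$ has finite-trace support, and
\[
 \norm{u_n}_\ell\le\lambda_n2^{(n+1)/\ell},\qquad\text{so}\qquad 2^{-n/\ell}\norm{u_n}_\ell\le2^{1/\ell}\mu_{2^n}(x)\le2^{1/p}\mu_{2^n}(x),\qquad \ell\in\{p,q\}.
\]
For $t\in I_n$ we have $t/2<2^n$, so $\mu_{2^n}(x)\le\mu_{t/2}(x)=(D_2\mu(x))(t)$. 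Therefore both scalar functions $f_p,f_q$ attached to $u$ are dominated pointwise by $2^{1/p}D_2\mu(x)\in E$. The ideal property of $E$ then gives $u\in Z$ and $\norm{u}_Z\le2^{1/p}\norm{D_2}_{E\to E}\norm{x}_{E(\M,\tau)}$.

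Second, I would show $Qu=x$. The partial sums telescope:
\[
 \sum_{n=-N}^{N}u_n=v\min(|x|,\lambda_{-N})-v\min(|x|,\lambda_{N+1}).
\]
The last term has operator norm at most $\lambda_{N+1}\to0$. For the first term, $|x|-\min(|x|,\lambda_{-N})$ is supported on $\ind_{(\lambda_{-N},\infty)}(|x|)$, whose trace is at most $2^{-N}\to0$ by \eqref{eq:strict-support}. So that difference tends to zero in measure, and the partial sums converge to $v|x|=x$ in measure. On the other hand, by Section~\ref{sub  Q} the same series converges absolutely in $X_p+X_q$, which embeds continuously into $S(\M,\tau)$ with the measure topology. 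Since that topology is Hausdorff, the two limits coincide, and $Qu=x$.

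Finally, surjectivity is immediate, since Proposition~\ref{prop:Q} places the range of $Q$ in $E(\M,\tau)$ and every $x$ there is hit. For \eqref{eq:distance-kernel}, given $z\in Z$ I would apply the construction to $x=Qz$. Then $z-u\in\ker Q$, so $\operatorname{dist}_Z(z,\ker Q)\le\norm{u}_Z$, which gives the bound.

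The main obstacle is the index bookkeeping that produces exactly the constant $2^{1/p}\norm{D_2}$. Slicing between $\lambda_{n-1}$ and $\lambda_n$ instead would force a $D_4$. A secondary point needing care is identifying the $X_p+X_q$ limit with the limit in measure.
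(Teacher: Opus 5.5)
Your proposal is correct and matches the paper's proof essentially step by step. Your $g_n(\lambda)=\min(\lambda,\lambda_n)-\min(\lambda,\lambda_{n+1})$ is exactly the paper's $\min\bigl((\lambda-\lambda_{n+1})_+,\lambda_n-\lambda_{n+1}\bigr)$, and the trace bound via \eqref{eq:strict-support}, the telescoping, the identification of the limit in measure with $Qu$, and the kernel-distance argument are the same as in the paper; the only cosmetic difference is that you dominate $f_p,f_q$ directly by $2^{1/p}D_2\mu(x)$, where the paper passes through the step function $h=\sum_n\lambda_n\ind_{I_n}$.
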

\begin{proof}
Fix $x\in E(\M,\tau)$, write $x=v|x|$, and set
$\lambda_n=\mu_{2^n}(x)$ for $n\in\Z$.
The sequence $(\lambda_n)$ is
nonincreasing and tends to zero as $n\to\infty$ (see 
\eqref{eq:fundamental} and the observation following it).
For each $n$, define the bounded nonnegative Borel function
\[
 g_n(t)=\min\bigl((t-\lambda_{n+1})_+,\lambda_n-\lambda_{n+1}\bigr),
 \qquad t\ge0.
\]
In particular, 
$b_n=g_n(|x|)$ 
is a bounded positive operator
in $\M$. Put
\begin{equation}\label{eq:layers}
 u_n=vb_n,\qquad
 e_n=\ind_{(\lambda_{n+1},\infty)}(|x|).
\end{equation}
Since $g_n$ vanishes on $[0,\lambda_{n+1}]$ and
$0\le g_n\le\lambda_n$, we have
\[
 0\le b_n\le\lambda_n e_n,\qquad
 e_nb_n=b_n,\qquad
 e_n\le\ind_{(0,\infty)}(|x|)=v^*v.
\]
Consequently,
$u_n^*u_n=b_nv^*vb_n=b_n^2$, and hence $|u_n|=b_n$.
Moreover, \eqref{eq:strict-support} implies
$\tau(e_n)\le 2^{n+1}$.
For each exponent $\ell\in\{p,q\}$ this yields
\begin{equation}\label{eq:layer-bound}
 \norm{u_n}_\ell^\ell
 =\tau(b_n^\ell)
 \le\lambda_n^\ell\tau(e_n)
 \le 2^{n+1}\lambda_n^\ell.
\end{equation}
In particular, $u_n\in X_p\cap X_q$. Define $u:=(u_n)_{n\in \mathbb{Z}}$. 

Define $h(t)=\sum_{n\in\Z}\lambda_n\ind_{I_n}(t)$.
For every $t>0$, choose $n$ with $t\in I_n=[2^n,2^{n+1})$.
Since $t/2\le 2^n\le t$, monotonicity gives
\[
 \mu_t(x)\le h(t)=\lambda_n\le\mu_{t/2}(x).
\]
Thus $h\in E$ and
$\norm{h}_E\le\norm{D_2}_{E\to E}\norm{x}_{E(\M,\tau)}$.
For $\ell=p,q$, estimate \eqref{eq:layer-bound} gives, on $I_n$,
\[
 \norm{F_\ell(u)(t)}_\ell
 =2^{-n/\ell}\norm{u_n}_\ell
 \le 2^{1/\ell}h(t).
\]
The definition of $\norm{\cdot}_Z$ therefore implies $u\in Z$ and
\begin{equation}\label{eq:liftbound}
 \norm{u}_Z
 \le 2^{1/p}\norm{D_2}_{E\to E}\norm{x}_{E(\M,\tau)}.
\end{equation}
Here $\max(2^{1/p},2^{1/q})=2^{1/p}$ because $p<q$.

It remains to verify $Qu=x$.
The identity
$g_n(t)=(t-\lambda_{n+1})_+-(t-\lambda_n)_+$
implies that for $N\ge1$,
\begin{equation}\label{eq:telescoping}
 \sum_{n=-N}^{N}u_n
 =v\bigl((|x|-\lambda_{N+1})_+-(|x|-\lambda_{-N})_+\bigr).
\end{equation}
Subtracting this identity from $x=v|x|$ gives
\[
 x-\sum_{n=-N}^{N}u_n
 =v\min(|x|,\lambda_{N+1})+v(|x|-\lambda_{-N})_+.
\]
The first term $v\min(|x|,\lambda_{N+1})$ has operator norm at most $\lambda_{N+1}$.
The second term has right support contained in
$\ind_{(\lambda_{-N},\infty)}(|x|)$, whose trace is at most $2^{-N}$
by \eqref{eq:strict-support}.
Fix $t>0$. When $2^{-N}\le t$, we have 
$\mu_t(v(|x|-\lambda_{-N})_+) =0$.
By  \eqref{eq:sv-add}, we have 
\[
 \mu_{2t}\left(x-\sum_{n=-N}^{N}u_n\right)
 \le\lambda_{N+1}\longrightarrow0
\]
as $N\to \infty $. 
It follows that the finite sums $\sum_{n=-N}^{N}u_n$ converge to $x$ in measure.

Since $u\in Z$ and $Qu =\sum_{n \in \mathbb{Z}} u_n  \in X_p+X_q$ (see \eqref{sub  Q}), it follows that $\sum_{n=-N}^{N}u_n$ converges to $Qu$ in measure\cite[Proposition 4.4.4]{DPS}.
Since the measure topology is Hausdorff, it follows that 
\begin{equation}\label{eq:onto}
 Qu=x.
\end{equation}
Thus $Q$ is surjective. 
{\color{black}To prove \eqref{eq:distance-kernel}, fix
$z\in Z$ and apply the preceding construction to $x=Qz$.
The resulting $u\in Z$ satisfies $Qu=Qz$, so $z-u\in\ker Q$.
Consequently, \eqref{eq:liftbound} gives
\[
 \operatorname{dist}_Z(z,\ker Q)
 \le\norm{z-(z-u)}_Z=\norm{u}_Z
 \le 2^{1/p}\norm{D_2}_{E\to E}\norm{Qz}_{E(\M,\tau)}.
\]
This completes the proof.}
\end{proof}

It follows from Propositions~\ref{prop:Q} and \ref{prop:lift} that
the induced map
\[
 \widetilde Q:Z/\ker Q\longrightarrow E(\M,\tau),\qquad
 \widetilde Q(z+\ker Q)=Qz,
\]
is a Banach space isomorphism. {\color{black}Since the quotient norm is
\[
 \norm{z+\ker Q}_{Z/\ker Q}
 =\operatorname{dist}_Z(z,\ker Q),
\]
estimates \eqref{eq:Qbound} and \eqref{eq:distance-kernel} yield
\begin{equation}\label{eq:quotient-bounds}
 \begin{aligned}
  \norm{\widetilde Q}_{Z/\ker Q\to E(\M,\tau)}
  &\le\norm{D_2}_{E\to E}(A+B),\\
  \norm{\widetilde Q^{-1}}_{E(\M,\tau)\to Z/\ker Q}
  &\le 2^{1/p}\norm{D_2}_{E\to E}.
 \end{aligned}
\end{equation}}

\section{Proof of the main theorem}\label{sec:mainproof}

We use the following standard quantitative form of the stability
of UMD under quotients \cite[Proposition~1.2(1)]{Bukhvalov}.
The stated constant follows from the duality identity for UMD
constants \cite[Proposition~8.8, formula~(8.17), p.~171]{PisierIHP}
and the norm estimates for the induced isomorphism $Z/\ker Q\to X$.

\begin{lemma}\label{lem:transfer}
Let $Z$ be UMD, let $X$ be a Banach space, and let $Q:Z\to X$ be
bounded and linear. Suppose that $\norm{Q}\le C_Q$ and that, for
every $x\in X$, there exists $z\in Z$ such that
\[
 Qz=x,\qquad \norm{z}_Z\le L\norm{x}_X.
\]
Then, for every $1<u<\infty$,
\[
 \beta_{u,X}\le C_Q L\,\beta_{u,Z}.
\]
\end{lemma}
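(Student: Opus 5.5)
We argue by duality, so that the quotient situation becomes a subspace situation, where the transfer of martingale transform inequalities is immediate.

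\emph{Step 1: the adjoint is an isomorphic embedding.} Consider the adjoint $Q^*:X^*\to Z^*$. We have $\norm{Q^*}\le C_Q$. For the lower bound, fix $\xi\in X^*$ and $x\in X$ with $\norm{x}_X\le1$. By hypothesis there is $z\in Z$ with $Qz=x$ and $\norm{z}_Z\le L$. Then
\[
 |\xi(x)|=|(Q^*\xi)(z)|\le L\norm{Q^*\xi}_{Z^*}.
\]
Taking the supremum over such $x$ gives
\[
 L^{-1}\norm{\xi}_{X^*}\le\norm{Q^*\xi}_{Z^*}\le C_Q\norm{\xi}_{X^*}.
\]

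\emph{Step 2: the martingale transform estimate in $X^*$.} Let $u'$ be the conjugate exponent of $u$, and let $(d_j)_{j=1}^m$ be an $X^*$-valued martingale difference sequence in $L^{u'}(\Omega;X^*)$. Since $Q^*$ is bounded and linear, $(Q^*d_j)$ is a $Z^*$-valued martingale difference sequence for the same filtration; strong measurability is preserved, and conditional expectations commute with bounded operators. Using the lower bound, then the definition of $\beta_{u',Z^*}$, then the upper bound, we get, pointwise in $\omega$ and after integration,
\[
 L^{-1}\Bigl\|\sum_j\varepsilon_jd_j\Bigr\|_{L^{u'}(X^*)}
 \le\Bigl\|\sum_j\varepsilon_jQ^*d_j\Bigr\|_{L^{u'}(Z^*)}
 \le\beta_{u',Z^*}\Bigl\|\sum_jQ^*d_j\Bigr\|_{L^{u'}(Z^*)}
 \le\beta_{u',Z^*}C_Q\Bigl\|\sum_jd_j\Bigr\|_{L^{u'}(X^*)}.
\]
Hence $\beta_{u',X^*}\le C_QL\,\beta_{u',Z^*}$.

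\emph{Step 3: returning to $X$.} We invoke the duality identity $\beta_{u,Y}=\beta_{u',Y^*}$ of \cite[Proposition~8.8, formula~(8.17)]{PisierIHP}, once for $Y=Z$ and once for $Y=X$. This gives
\[
 \beta_{u,X}=\beta_{u',X^*}\le C_QL\,\beta_{u',Z^*}=C_QL\,\beta_{u,Z}.
\]

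The main obstacle is Step 3. We must check that the duality identity holds under the hypotheses in which it is stated, and in particular that both inequalities $\beta_{u',Y^*}\le\beta_{u,Y}$ and $\beta_{u,Y}\le\beta_{u',Y^*}$ are available. If reflexivity is required, note that $Z$ is UMD, hence reflexive, and that $X$ is isomorphic to the quotient $Z/\ker Q$, so $X$ is reflexive as well. We also need that the constants may be computed over finitely generated filtrations with simple martingale differences, so that the pairing between $L^u(\Omega;X)$ and $L^{u'}(\Omega;X^*)$ is norming. If that identity is inconvenient, the fallback is a direct duality computation. One estimates $\norm{\sum_j\varepsilon_jd_j}_{L^u(X)}$ by testing against simple $X^*$-valued functions $G$, and moves the transform onto $\E(G\mid\cF_j)-\E(G\mid\cF_{j-1})$. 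This uses exactly the inequalities of Steps 1 and 2.
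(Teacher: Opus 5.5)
Your proof is correct and is essentially the paper's. The paper also uses Pisier's duality identity $\beta_{u,Y}=\beta_{u',Y^*}$ to turn the quotient into a subspace, and gets the factor $C_QL$ from the bounds $\|\widetilde Q\|\le C_Q$ and $\|\widetilde Q^{-1}\|\le L$ for the induced isomorphism $Z/\ker Q\to X$. The only difference is cosmetic: you treat $Q^*:X^*\to Z^*$ directly as an embedding with distortion $C_QL$, rather than factoring through $(Z/\ker Q)^*\cong(\ker Q)^\perp\subset Z^*$.
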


The UMD property is stable under finite direct sums; see
\cite[Section~3.1, p.~43]{Amann2009}. 
We record the quantitative estimate needed below for completeness.

\begin{lemma}\label{lem:direct-sum}
Let $U$ and $V$ be UMD Banach spaces, and equip $U\oplus_\infty V$
with the norm
\[
 \norm{(a,b)}_{U\oplus_\infty V}
 =\max\{\norm{a}_U,\norm{b}_V\}.
\]
Then, for every $1<u<\infty$,
\begin{equation}\label{eq:sumconstant}
 \beta_{u,U\oplus_\infty V}
 \le\bigl(\beta_{u,U}^{\,u}+\beta_{u,V}^{\,u}\bigr)^{1/u}.
\end{equation}
\end{lemma}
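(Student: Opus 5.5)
The plan is to reduce the estimate for the direct sum to the two coordinate estimates by means of the elementary pointwise bound
\[
 \max\{\alpha,\beta\}^u\le\alpha^u+\beta^u,\qquad \alpha,\beta\ge0,
\]
combined with the fact that martingale differences in $U\oplus_\infty V$ split coordinatewise into martingale differences in $U$ and in $V$.

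Fix a probability space, a finite filtration $(\cF_j)_{j=0}^m$, a martingale difference sequence $(d_j)_{j=1}^m$ in $L^u(\Omega;U\oplus_\infty V)$, and signs $\varepsilon_j$. Write $d_j=(a_j,b_j)$, where $a_j=\pi_U d_j$ and $b_j=\pi_V d_j$ are the coordinate projections. The first step is to check that $(a_j)$ and $(b_j)$ are martingale difference sequences in $L^u(\Omega;U)$ and $L^u(\Omega;V)$ with respect to the same filtration.
\begin{itemize}
\item Strong measurability and $\cF_j$-measurability pass through the bounded projections $\pi_U,\pi_V$.
\item Integrability follows from $\norm{a_j}_U\le\norm{d_j}$ and $\norm{b_j}_V\le\norm{d_j}$.
\item Bochner conditional expectation commutes with bounded linear operators, so $\E(a_j\mid\cF_{j-1})=\pi_U\E(d_j\mid\cF_{j-1})=0$, and likewise for $b_j$.
\end{itemize}

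Next, compute pointwise. Using the max–sum bound and integrating,
\[
 \E\Bigl\|\sum_j\varepsilon_jd_j\Bigr\|^u
 \le\E\Bigl\|\sum_j\varepsilon_ja_j\Bigr\|_U^u+\E\Bigl\|\sum_j\varepsilon_jb_j\Bigr\|_V^u .
\]
Apply \eqref{eq:UMD} in $U$ and in $V$ to bound the right-hand side by
\[
 \beta_{u,U}^u\,\E\Bigl\|\sum_ja_j\Bigr\|_U^u+\beta_{u,V}^u\,\E\Bigl\|\sum_jb_j\Bigr\|_V^u .
\]
Each coordinate norm is dominated by $\norm{\sum_jd_j}_{U\oplus_\infty V}$, so this is at most
\[
 \bigl(\beta_{u,U}^u+\beta_{u,V}^u\bigr)\,\E\Bigl\|\sum_jd_j\Bigr\|^u .
\]
Taking $u$-th roots and the supremum over all data gives \eqref{eq:sumconstant}.

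There is no real obstacle here. The only point requiring care is the justification that coordinate projections preserve the martingale difference structure, and that is immediate from linearity and boundedness of the projections.
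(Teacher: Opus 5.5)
Your proposal is correct and follows the paper's proof essentially step for step. Both split $d_j=(a_j,b_j)$ via the coordinate projections, which commute with conditional expectation. Both then apply the pointwise bound $\max\{\alpha,\beta\}^u\le\alpha^u+\beta^u$, use the UMD inequality in each coordinate, and dominate each coordinate norm by the $\oplus_\infty$-norm.
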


\begin{proof}
Fix $1<u<\infty$. Let $d_j=(a_j,b_j)$ be a finite
$U\oplus_\infty V$-valued martingale difference sequence, and let
$\varepsilon_j\in\{-1,1\}$.
The bounded coordinate projections commute with conditional
expectation (see \cite[Section~2.6]{HNVW1}). Thus $(a_j)$
and $(b_j)$ are martingale difference sequences with values in
$U$ and $V$, respectively. By the definition of the maximum norm, we have 
\begin{align*}
 \left\|\sum_j\varepsilon_jd_j\right\|_{L^u(U\oplus_\infty V)}^u
 &\le
 \left\|\sum_j\varepsilon_ja_j\right\|_{L^u(U)}^u+
 \left\|\sum_j\varepsilon_jb_j\right\|_{L^u(V)}^u\\
 &\le
 \beta_{u,U}^u\left\|\sum_ja_j\right\|_{L^u(U)}^u+
 \beta_{u,V}^u\left\|\sum_jb_j\right\|_{L^u(V)}^u\\
 &\le(\beta_{u,U}^u+\beta_{u,V}^u)
       \left\|\sum_jd_j\right\|_{L^u(U\oplus_\infty V)}^u,
\end{align*}
which proves \eqref{eq:sumconstant}.
\end{proof}

Now, we are ready to prove Theorem~\ref{thm:main}.
\begin{proof}[Proof of Theorem~\ref{thm:main}]
Fix $1<u<\infty$.

Set
\[
 h_2=\norm{D_2}_{E\to E},\qquad
 K_{p,u}=\beta_{u,E(L^p(\M,\tr))},\qquad
 K_{q,u}=\beta_{u,E(L^q(\M,\tr))}.
\]
Both $K_{p,u}$ and $K_{q,u}$ are finite by \eqref{eq:known-UMD}.
Since $Z$ embeds isometrically as a closed subspace of the direct
sum in Lemma~\ref{lem:Zumd}, Lemma~\ref{lem:direct-sum} implies
\[
 \beta_{u,Z}\le\bigl(K_{p,u}^{\,u}+K_{q,u}^{\,u}\bigr)^{1/u}.
\]
To apply Lemma~\ref{lem:transfer}, define
\[
 C_Q:=h_2(A+B),\qquad L:=2^{1/p}h_2.
\]
Proposition~\ref{prop:Q} yields $\norm{Q}\le C_Q$.
Proposition~\ref{prop:lift} yields, for every $x\in E(\M,\tau)$,
a preimage $z\in Z$ with $Qz=x$ and
$\norm{z}_Z\le L\norm{x}_{E(\M,\tau)}$.
\rev{Applying Lemma~\ref{lem:transfer} with $X=E(\M,\tau)$ gives}
\begin{equation}\label{eq:finalbeta}
 \beta_{u,E(\M,\tr)}
 \le 2^{1/p}h_2^2(A+B)
       \bigl(K_{p,u}^{\,u}+K_{q,u}^{\,u}\bigr)^{1/u}<\infty.
\end{equation}
In particular, every finite $E(\M,\tr)$-valued martingale difference
sequence $(d_j)$ on an arbitrary probability space satisfies
\[
 \left\|\sum_j\varepsilon_jd_j\right\|_{L^u(\Omega;E(\M,\tr))}
 \le C_{E,u}
       \left\|\sum_jd_j\right\|_{L^u(\Omega;E(\M,\tr))}.
\]
By \eqref{eq:known-UMD}, we may take
\[
 C_{E,u}=2^{1/p}h_2^2(A+B)c_{E,u}
            \bigl(b_{p,u}^{\,u}+b_{q,u}^{\,u}\bigr)^{1/u}.
\]
This constant depends only on $E$ and $u$.
\end{proof}


\section{Applications}
\label{sec:extensions}

We deduce the sequence-space and finite-trace versions of
Theorem~\ref{thm:main} by extending the underlying symmetric spaces
to $E(0,\infty)$ while preserving the UMD property.

\subsection{The case of symmetric sequence spaces}

A symmetric Banach sequence space $\ell_E$ is a nonzero Banach space of
complex sequences with the following property: if $b\in \ell_E$ and
$a^*\le b^*$, then $a\in \ell_E$ and $\norm{a}_{\ell_E}\le\norm{b}_{\ell_E}$.
Here $a^*$ is the decreasing rearrangement of $(|a_k|)_{k\ge1}$
with respect to counting measure. We write $\mathbf e_k$ for the
coordinate vectors and $c_0$ for the space of sequences converging
to zero. For a scalar sequence $a$, its associated step function is
\[
 \widetilde a(t)=\sum_{k\ge1}a_k\ind_{[k-1,k)}(t),\qquad t>0.
\]
For background on symmetric spaces and their sequence-space
counterparts, see \cite[Chapter~II, \S4.1, p.~90, and \S8.1,
pp.~157--158]{KPS}.

We use Montgomery-Smith's estimate for independent random variables
\cite[Theorem~1]{MontgomerySmith2002}.
Let $\Omega=(0,1)^{\mathbb N}$ be equipped with the product Lebesgue
probability measure $\Prob$. 
A point $\omega\in\Omega$ is a
sequence $\omega=(\omega_k)_{k\ge1}$, where $\omega_k\in(0,1)$ is its
$k$th coordinate.
The random variables
\[
 U_k(\omega):=k-1+\omega_k,\qquad k\ge1,
\]
are independent because they depend on distinct coordinate
functions, and $U_k$ is uniformly distributed on $(k-1,k)$ because it
is the translate by $k-1$ of the uniformly distributed coordinate
$\omega_k$.

For a measurable function $f:(0,\infty)\to\mathbb C$, set
\begin{equation}\label{eq:uniform-sampling}
 Jf(\omega):=\bigl(f(U_k(\omega))\bigr)_{k\ge1},\qquad
\omega\in\Omega.
\end{equation}
For a random variable $X\in L^1(\Omega,\Prob)$, we write
$\E X=\int_\Omega X\,\mathrm d\Prob$ for its expectation.

\begin{lemma}\label{lem:sequence-extension}
Let $\ell_E$ be a UMD symmetric Banach sequence space on $\mathbb N$.
There is a UMD symmetric Banach function space $F$ on $(0,\infty)$
such that, for every scalar sequence $a=(a_k)_{k\ge1}$,
\[
 a\in \ell_E\quad\Longleftrightarrow\quad
 \widetilde a:=\sum_{k\ge1}a_k\ind_{[k-1,k)}\in F.
\]
Moreover, there are absolute constants $c,C>0$ such that
\begin{equation}\label{eq:sequence-extension-norm}
 c\norm{a}_{\ell_E}\le\norm{\widetilde a}_F\le C\norm{a}_{\ell_E},
 \qquad a\in \ell_E.
\end{equation}
\end{lemma}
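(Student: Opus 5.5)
The plan is to take $F$ to be the Köthe--Bochner-type space $\ell_E(L^1(0,1)+L^\infty(0,1))$, but realised through the Montgomery-Smith construction in the form
\[
 \norm{f}_F:=\Bigl(\E\,\norm{Jf}_{\ell_E}^2\Bigr)^{1/2}
 \quad\text{or rather}\quad
 \norm{f}_F:=\norm{Jf}_{L^2(\Omega;\ell_E)},
\]
with $J$ the uniform sampling map in \eqref{eq:uniform-sampling}. The point is that for a step function $\widetilde a$ we have $J\widetilde a=a$ deterministically, so $\norm{\widetilde a}_F=\norm{a}_{\ell_E}$. Moreover $J$ maps $F$ isometrically onto a closed subspace of $L^2(\Omega;\ell_E)$. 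That Bochner space is UMD because $\ell_E$ is UMD; this is the standard fact that $L^u(\Omega;Y)$ inherits UMD from $Y$, or the Köthe--Bochner statement \eqref{eq:known-UMD} with $E=L^2$. Closed subspaces of UMD spaces are UMD \cite[Proposition~1.2(1)]{Bukhvalov}. This settles the UMD property and \eqref{eq:sequence-extension-norm}, with $c=C=1$ for this part.

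The substantive step is that $F$, defined by this norm, is a symmetric Banach function space on $(0,\infty)$. The ideal property is easy: if $|f|\le|g|$ then $|Jf|\le|Jg|$ coordinatewise. Completeness follows from the closedness argument above together with Fatou, since $\ell_E$ is reflexive. Rearrangement invariance is where \cite[Theorem~1]{MontgomerySmith2002} enters. The random sequence $Jf=(f(U_k))_k$ consists of independent variables, and Montgomery-Smith's theorem says that $\norm{(X_k)}$ in a rearrangement-invariant sequence norm is comparable, up to absolute constants, to a quantity that depends only on the distribution of the \emph{mixture} of the $X_k$. Here that mixture is exactly the distribution of $f$ with respect to Lebesgue measure on $(0,\infty)$.

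Concretely, write $f^*$ for the decreasing rearrangement. I would aim to show that
\[
 \E\norm{Jf}_{\ell_E}\approx \E\norm{Jf^*}_{\ell_E}
 \approx\norm{(f^*(k))_{k\ge1}}_{\ell_E}+\Bigl\|\int_0^1 f^*\Bigr\|
\]
up to absolute constants, with the $L^2$ and $L^1$ moments comparable by Kahane-type equivalence for sums of independent variables in $\ell_E$. If $f$ and $g$ are equimeasurable this gives only $\norm{f}_F\approx\norm{g}_F$, not equality. To obtain a genuinely symmetric norm, I would therefore replace $\norm{\cdot}_F$ by the equivalent norm
\[
 \norm{f}_{F}:=\sup\{\norm{g}_{F_0}: g^*=f^*\},
\]
where $F_0$ denotes the sampled space. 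This sup norm is symmetric and is equivalent to the $F_0$-norm with absolute constants, and the UMD property is preserved under such isomorphisms. The equivalence $c\norm{a}_{\ell_E}\le\norm{\widetilde a}_F\le C\norm{a}_{\ell_E}$ then survives with absolute constants coming from Montgomery-Smith's theorem.

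I expect the main obstacle to be this last point: extracting from \cite[Theorem~1]{MontgomerySmith2002} a two-sided comparison of $\E\norm{Jf}_{\ell_E}$ with a functional of $f^*$ alone, with absolute constants. Once that comparison is available, the remaining checks are routine: the symmetrized sup is a norm, $F$ is an ideal, and indicators of finite-measure sets lie in $F$ with integration over them continuous. The last check follows because $\ell_E\subset\ell^\infty$ continuously and $\ell_E\supset$ the finitely supported sequences.
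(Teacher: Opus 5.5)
\textbf{Verdict.} Your route differs from the paper's, and as written it has a genuine gap at the step you call routine, plus a false intermediate claim. Both can be repaired, and the repairs lead back to the paper's choice of norm.

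\textbf{The two routes.} You pull back the norm of $L^2(\Omega;\ell_E)$ along $J$ to get $F_0$, and then symmetrize. The paper instead defines $F$ directly by the Montgomery-Smith functional $\Phi(f^*)=\norm{f^*\ind_{(0,1)}}_{L^2}+\norm{(\int_{k-1}^kf^*)_{k}}_{\ell_E}$. That norm is rearrangement invariant by construction. \cite[Theorem~1]{MontgomerySmith2002} is then used only to show that $J$ is an isomorphic embedding.

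\textbf{The gap: the triangle inequality for your symmetrized norm.} For $N(f)=\sup\{\norm{g}_{F_0}:g^*=f^*\}$, subadditivity does not follow formally, because the index set depends on $f$. Nor can you split a rearrangement of $f_1+f_2$ into rearrangements of $f_1$ and $f_2$. Take $f_1=2t\ind_{(0,1/2)}$, $f_2=(2-2t)\ind_{[1/2,1)}$ and $g=(1-t)\ind_{(0,1)}=(f_1+f_2)^*$. There are no $g_1,g_2$ with $g_i^*\le f_i^*$ and $g\le|g_1|+|g_2|$:
\begin{itemize}
\item the supports of $g_1,g_2$ would have to partition $(0,1)$;
\item comparing integrals then forces $|g_1|=g$ on a set $A$ with $|A\cap(0,u)|=u/2$ for all $u\in(0,1)$;
\item this contradicts Lebesgue's density theorem.
\end{itemize}
The step can be rescued by duality:
\begin{itemize}
\item $F_0$ has the Fatou property, so $\norm{g}_{F_0}=\sup_{\norm{\phi}_{F_0'}\le1}\int|g\phi|$.
\item Resonance of $(0,\infty)$ gives $\sup_{g^*=f^*}\int|g\phi|=\int_0^\infty f^*\phi^*$.
\item Hence $N(f)=\sup_\phi\int f^*\phi^*$.
\item This is subadditive, since $(f_1+f_2)^*\prec f_1^*+f_2^*$ and $\phi^*$ is decreasing.
\end{itemize}
None of this is in your proposal. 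Also, once $\norm{f}_{F_0}\asymp\Phi(f^*)$ is known, it is simpler to take $\Phi(f^*)$ itself as the norm. Its triangle inequality comes from the same majorization, and this is exactly what the paper does.

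\textbf{The false claim: Kahane-type comparison of moments.} The $L^1$ and $L^2$ moments of $\norm{Jf}_{\ell_E}$ are not comparable with absolute constants. If $f$ is supported in $(0,1)$, then $Jf=f(U_1)\mathbf e_1$. The two moments are then $\norm{f}_{L^1(0,1)}$ and $\norm{f}_{L^2(0,1)}$, up to the factor $\norm{\mathbf e_1}_{\ell_E}$, and these are not equivalent. Kahane--Hoffmann-J{\o}rgensen only compares them up to an additional maximal term.

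Consistently with this, your displayed functional with $\int_0^1f^*$ is the $M=L^1$ form of Montgomery-Smith's theorem. You cannot switch to $L^1(\Omega;\ell_E)$ instead, because that space is not UMD.

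\textbf{How to fix it.} Apply \cite[Theorem~1]{MontgomerySmith2002} directly with $M=L^2(0,1)$. This produces the term $\norm{f^*\ind_{(0,1)}}_{L^2}$, as in \eqref{eq:uniform-finite-estimate}. That theorem concerns finitely many variables, so you also need the truncations $J_nf$ and the passage to the limit, which the paper carries out using the Fatou property and order continuity of $\ell_E$. With these corrections the estimate you need, that $\norm{Jf}_{L^2(\Omega;\ell_E)}$ depends only on $f^*$ up to absolute constants, is the same one the paper uses.
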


\begin{proof}
Without loss of generality, we may assume that
$\norm{\mathbf e_k}_{\ell_E}=1$ for every $k\ge1$.

Since $\ell_E$ is reflexive, it has the Fatou property and an
order-continuous norm; see \cite[Chapter~I, \S2, p.~8, and
Chapter~II, Introduction, pp.~44--45]{KPS}.

Let $F$ consist of the measurable functions $f:(0,\infty)\to\mathbb C$
such that
\[
 f^*\ind_{(0,1)}\in L^2(0,\infty),\qquad
 \left(\int_{k-1}^k f^*(t)\,\mathrm dt\right)_{k\ge1}\in\ell_E,
\]
and equip $F$ with
\begin{equation}\label{eq:sequence-function-norm}
 \norm{f}_F
 :=\norm{f^*\ind_{(0,1)}}_{L^2(0,\infty)}
   +\left\|\left(\int_{k-1}^k f^*(t)\,\mathrm dt\right)_{k\ge1}
     \right\|_{\ell_E}.
\end{equation}
The finite-interval version of this norm is given in
\cite[Section~2, preceding Lemma~4]{MontgomerySmith2002}.
Its triangle inequality extends to $(0,\infty)$ by truncating
functions to $(0,n)$ and using the Fatou property of $\ell_E$.
The defining formula also gives symmetry and the Fatou property.
For every measurable set $A$ of finite measure, $\ind_A\in F$ and
\begin{align}\label{local integral estimate}
 \int_A|f(t)|\,\mathrm dt
 \le\int_0^{|A|}f^*(t)\,\mathrm dt
 \le\lceil|A|\rceil\int_0^1f^*(t)\,\mathrm dt
 \stackrel{\tiny \mbox{\cite[Theorem 4.4.6]{DPS}}}{\le}\lceil|A|\rceil\norm{f}_F.
\end{align}
Here,  $\lceil t\rceil$ denotes the least integer greater than or equal
to $t$, and $|A|$ denotes the Lebesgue measure of $A$.
To verify completeness, suppose that $\sum_j\norm{v_j}_F<\infty$.
The Fatou property and \eqref{local integral estimate} imply that
$\sum_j|v_j|\in F$, so $v:=\sum_jv_j$ exists almost everywhere and
\[
 \left\|v-\sum_{j=1}^n v_j\right\|_F
 \le\sum_{j>n}\norm{v_j}_F\longrightarrow0.
\]
Thus $F$ is a symmetric Banach function space on $(0,\infty)$.

We next show that $J$, as defined in
\eqref{eq:uniform-sampling}, is an isomorphic embedding of $F$ into
$L^2(\Omega;\ell_E)$. For $f\in F$, put
\[
 f_n=f\ind_{(0,n)},\qquad
 J_nf=\bigl(f(U_1),\ldots,f(U_n),0,\ldots\bigr).
\]
The variables $|f(U_k)|$, $1\le k\le n$, are independent and satisfy
\[
 \sum_{k=1}^n\Prob\{|f(U_k)|>s\}
 =\sum_{k=1}^n\int_{k-1}^k\ind_{\{|f|>s\}}(t)\,\mathrm dt
 =d_{f_n}(s),\qquad s>0.
\]
Here and below, for nonnegative quantities $X$ and $Y$, the
notation $X\lesssim Y$ means $X\le C Y$ for a constant $C$ independent
of the quantities under consideration, and $X\asymp Y$ means both
$X\lesssim Y$ and $Y\lesssim X$.
Montgomery-Smith's theorem \cite[Theorem~1]{MontgomerySmith2002}
therefore applies with $M=L^2(0,1)$, $N=\ell_E$ and $Y=f_n^*$
in its notation, and gives
\begin{equation}\label{eq:uniform-finite-estimate}
 \norm{J_nf}_{L^2(\Omega;\ell_E)}
 \asymp
 \norm{f_n^*\ind_{(0,1)}}_{L^2(0,\infty)}
 +\left\|\bigl(f_n^*(k)\bigr)_{k\ge1}\right\|_{\ell_E}
 \asymp\norm{f_n}_F.
\end{equation}
All constants are absolute; in particular, they are independent of
$n$, $f$ and $\ell_E$.

Since $|f_n|\uparrow|f|$, the Fatou property gives
$\norm{f_n}_F\uparrow\norm{f}_F$. Moreover,
\[
 \E\sup_{n\ge1}\norm{J_nf(\omega)}_{\ell_E}^2
 =\sup_{n\ge1}\norm{J_nf}_{L^2(\Omega;\ell_E)}^2
 \lesssim\norm{f}_F^2.
\]
The Fatou property of $\ell_E$ therefore gives
$Jf(\omega)\in\ell_E$ almost surely. Order-continuity of $\norm{\cdot}_{\ell_E}$  implies
$J_nf(\omega)\to Jf(\omega)$ in $\ell_E$ almost surely, so $Jf$ is
strongly measurable. Monotone convergence now yields
\[
 \norm{Jf}_{L^2(\Omega;\ell_E)}^2
 =\lim_{n\to\infty}\norm{J_nf}_{L^2(\Omega;\ell_E)}^2.
\]
Passing to the limit in \eqref{eq:uniform-finite-estimate}, we obtain
\begin{equation}\label{eq:uniform-embedding}
 \norm{Jf}_{L^2(\Omega;\ell_E)}\asymp\norm{f}_F,\qquad f\in F.
\end{equation}
Thus the linear map $J$ has closed range. Since $\ell_E$ is UMD,
$L^2(\Omega;\ell_E)$ is UMD by
\cite[Proposition~1.2(2)]{Bukhvalov}; hence $F$ is UMD by
\cite[Proposition~1.2(1)]{Bukhvalov} and
\eqref{eq:uniform-embedding}.

Finally, for every scalar sequence $a$, fix $k\ge1$ and
$\omega\in\Omega$. Since $U_k(\omega)=k-1+\omega_k\in(k-1,k)$, the
inner sum below, which is taken over $j$, has exactly one nonzero term,
namely the term $j=k$. Thus
\[
\begin{aligned}
J\widetilde a(\omega)
=\bigl(\widetilde a(U_k(\omega))\bigr)_{k\ge1}=\left(
  \sum_{j\ge1}a_j\,
  \mathbf1_{[j-1,j)}(k-1+\omega_k)
  \right)_{k\ge1}
=(a_k)_{k\ge1}=a,
\qquad \omega\in\Omega.
\end{aligned}
\]
If $\widetilde a\in F$, then
\eqref{eq:uniform-embedding} gives
$J\widetilde a\in L^2(\Omega;\ell_E)$, so
$J\widetilde a(\omega)\in\ell_E$ for almost every $\omega\in\Omega$.
The preceding identity shows that $J\widetilde a(\omega)=a$ for every
$\omega\in\Omega$; hence $a\in\ell_E$.
Conversely, if $a\in\ell_E$, then $(\widetilde a)^*=\widetilde{a^*}$.  In particular,  
\[
(\widetilde a)^*(t)=a_k^*
\quad\text{for almost every }t\in[k-1,k),\qquad k\ge1.
\]
Thus, $\widetilde a\in F$ with  
\[
\qquad \norm{a}_{\ell_E}\le 
\norm{\widetilde a}_F
\stackrel{\eqref{eq:sequence-function-norm}}{=}\left(\int_0^1(a_1^*)^2\,\mathrm dt\right)^{1/2} 
 +\norm{a^*}_{\ell_E}  
=\norm{a}_{\ell_\infty}+\norm{a}_{\ell_E} \le2\norm{a}_{\ell_E}.
\]
The proof is complete. 
\end{proof}

Theorem~\ref{thm:main} and Lemma~\ref{lem:sequence-extension} yield
the following corollary, which answers the question of Lust-Piquard
and Pisier \cite[p.~251]{LP}.

\begin{corollary}\label{cor:sequence-ideal-umd}
Let $\ell_E$ be a UMD symmetric Banach sequence space. Then
\[
 \begin{gathered}
 C_E=\{x\in\mathcal K(\ell^2):(s_n(x))_{n\ge1}\in \ell_E\},
 \\
 \norm{x}_{C_E}=\norm{(s_n(x))_{n\ge1}}_{\ell_E},
 \end{gathered}
\]
is UMD. Here $\mathcal K(\ell^2)$ is the space of compact operators
on $\ell^2$, and $s_n(x)$ are the singular values of $x$, in
non-increasing order and counted with multiplicity.
\end{corollary}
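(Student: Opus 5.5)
The plan is to realise $C_E$ as a symmetric operator space $F(\M,\tau)$ built from the function space $F$ of Lemma~\ref{lem:sequence-extension}, and then to apply Theorem~\ref{thm:main}. The natural choice is $\M=B(\ell^2)$ with the standard trace $\tau=\operatorname{Tr}$. Lemma~\ref{lem:sequence-extension} provides a UMD symmetric Banach function space $F$ on $(0,\infty)$ with $\norm{\widetilde a}_F\asymp\norm{a}_{\ell_E}$. Theorem~\ref{thm:main} then shows that $F(B(\ell^2),\operatorname{Tr})$ is UMD. It therefore suffices to prove that $C_E=F(B(\ell^2),\operatorname{Tr})$ as sets, with equivalent norms, since the UMD property is stable under Banach space isomorphisms.

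For the identification, I first note that for $x\in S(B(\ell^2),\operatorname{Tr})$ the singular value function is the step function
\[
\mu_t(x)=\sum_{k\ge1}s_k(x)\ind_{[k-1,k)}(t),
\]
that is, $\mu(x)=\widetilde{s(x)}$. This holds because every nonzero projection has trace at least $1$, so $d_x$ takes values in $\{0,1,2,\ldots\}\cup\{\infty\}$. Moreover, $\operatorname{Tr}$-measurable operators are exactly the bounded operators $x$ for which $\operatorname{Tr}(\ind_{(a,\infty)}(|x|))<\infty$ for all $a>0$.

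If $x\in F(B(\ell^2),\operatorname{Tr})$, then $\mu_t(x)\to0$, by the argument following \eqref{eq:fundamental} applied to $F$. Hence all spectral projections $\ind_{(a,\infty)}(|x|)$ with $a>0$ are finite rank, so $x$ is compact. Then $\widetilde{s(x)}=\mu(x)\in F$, and Lemma~\ref{lem:sequence-extension} gives $s(x)\in\ell_E$ with $\norm{x}_{C_E}\le c^{-1}\norm{x}_{F(\M)}$.

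Conversely, let $x\in C_E$. Then $x$ is compact, hence $\tau$-measurable, and $\mu(x)=\widetilde{s(x)}\in F$ with $\norm{x}_{F(\M)}\le C\norm{x}_{C_E}$. The two norms are therefore equivalent, and the identity map is an isomorphism $C_E\cong F(B(\ell^2),\operatorname{Tr})$. Since martingale transform constants change by at most the product of the isomorphism constants, this also gives $\beta_{u,C_E}\le c^{-1}C\,\beta_{u,F(B(\ell^2))}<\infty$.

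The only delicate point I expect is the identity $\mu(x)=\widetilde{s(x)}$ together with the verification that membership in $F(\M)$ forces compactness. This is what rules out bounded non-compact operators such as $\ind$. It is handled by the decay $\mu_t(x)\to0$, which is valid for $F$ because $F$ is UMD, so the fundamental-function growth in \eqref{eq:fundamental} applies. Everything else is a direct invocation of Lemma~\ref{lem:sequence-extension} and Theorem~\ref{thm:main}.
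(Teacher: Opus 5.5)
Your proposal is correct and follows the route the paper intends: the paper states the corollary as an immediate consequence of Theorem~\ref{thm:main} applied to the space $F$ of Lemma~\ref{lem:sequence-extension} with $\M=B(\ell^2)$, $\tau=\operatorname{Tr}$, and your identification $\mu(x)=\widetilde{s(x)}$, the decay argument forcing compactness, and the norm comparison via \eqref{eq:sequence-extension-norm} supply the details the paper leaves implicit. One side remark needs correcting: under the paper's definition $S(B(\ell^2),\operatorname{Tr})$ is all of $B(\ell^2)$ (measurability only requires $\operatorname{Tr}(\ind_{(a,\infty)}(|x|))<\infty$ for \emph{some} $a>0$, while requiring it for \emph{all} $a>0$ characterizes $\mathcal K(\ell^2)$), and this is exactly why your decay step excluding operators such as $\ind$ is genuinely needed.
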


\subsection{The case of function spaces on a finite interval}
\label{subsec:finite-interval}

Recall that every UMD symmetric Banach function space $E$ on $(0,1)$
has nontrivial Boyd indices, that is, $1<p_E\le q_E<\infty$;
see Theorem~\ref{thm:type-cotype} and \cite[p.~132]{LT2}.

We use the extension of a symmetric function space on $(0,1)$
constructed by Johnson, Maurey, Schechtman and Tzafriri
\cite[Section~8]{JMST}; see
\cite[Theorem~2.f.1(ii), p.~203]{LT2}
(see also \cite{AS2010,Astashkin2011}).

\begin{lemma}\label{lem:finite-interval-extension}
Let $E$ be a UMD symmetric Banach function space on $(0,1)$.
Set
 $F=Z_E^2$, consisting of all measurable functions
$g$ on $(0,\infty)$ for which the following norm is finite:
\begin{equation}\label{eq:finite-extension-norm}
 \norm{g}_F=
 \max\left\{
   \norm{g^*\ind_{(0,1)}}_E,
   \left(\sum_{n=0}^{\infty}
       \left(\int_n^{n+1}g^*(t)\dd t\right)^2
       \right)^{1/2}
 \right\},
\end{equation}
which is equivalent to
\[
 \norm{g^*\ind_{(0,1)}}_E
 +\norm{g^*\ind_{(1,\infty)}}_{L^2(0,\infty)}.
\]
The space $E$ is isomorphic to $Z_E^2$. 
\end{lemma}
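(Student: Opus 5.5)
The plan is to establish, in order, (i) that $F=Z_E^2$ is a symmetric Banach function space on $(0,\infty)$; (ii) the equivalence of \eqref{eq:finite-extension-norm} with $\norm{g^*\ind_{(0,1)}}_E+\norm{g^*\ind_{(1,\infty)}}_{L^2(0,\infty)}$; and (iii) the isomorphism $E\simeq Z_E^2$, which I would take from the Johnson--Maurey--Schechtman--Tzafriri theorem \cite[Section~8]{JMST}, \cite[Theorem~2.f.1(ii)]{LT2}. Point (iii) also yields the consequence needed later: $F$ is UMD, since UMD passes through Banach space isomorphisms.

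For (i), both terms in \eqref{eq:finite-extension-norm} depend only on $g^*$, so symmetry and the ideal property are immediate. The triangle inequality follows from $\int_0^t(f+g)^*\le\int_0^t f^*+\int_0^t g^*$ together with the Fatou property of $E$, which holds because $E$ is reflexive; this is the same truncation argument as in the proof of Lemma~\ref{lem:sequence-extension}. The Fatou property of $F$ is inherited from that of $E$ and of $\ell^2$. Completeness then follows exactly as for the space in Lemma~\ref{lem:sequence-extension}, using the local integrability bound $\int_A|g|\le\lceil|A|\rceil\,C\norm{g}_F$. This bound in turn comes from the continuous inclusion $E\subset L^1(0,1)$.

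For (ii), I would compare the $\ell^2$-sum with $\norm{g^*\ind_{(1,\infty)}}_2$ using monotonicity of $g^*$. For $n\ge1$, Jensen (Cauchy--Schwarz) gives $(\int_n^{n+1}g^*)^2\le\int_n^{n+1}(g^*)^2$. The $n=0$ term is controlled by $\int_0^1g^*\le C\norm{g^*\ind_{(0,1)}}_E$. In the other direction,
\[
 \int_1^\infty (g^*)^2=\sum_{n\ge1}\int_n^{n+1}(g^*)^2\le\sum_{n\ge1}g^*(n)^2\le\sum_{n\ge0}\Bigl(\int_{n}^{n+1}g^*\Bigr)^2,
\]
where the last step uses $g^*(n)\le\int_{n-1}^{n}g^*$. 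Combining the two directions and comparing max with sum gives the asserted equivalence with absolute constants, up to the constant of $E\hookrightarrow L^1(0,1)$.

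The main step, and the expected obstacle, is (iii). Here I would only verify the hypotheses of \cite[Theorem~2.f.1(ii)]{LT2}, which asserts $E\simeq Z_E^2$ for a separable r.i.\ space on $(0,1)$ with nontrivial Boyd indices. By the remark preceding the lemma, Theorem~\ref{thm:type-cotype} together with \cite[p.~132]{LT2} give $1<p_E\le q_E<\infty$. Reflexivity gives separability, because the norm is order continuous. If the cited theorem is stated under slightly different conditions (for instance $E\ne L^2$, or an inclusion between $E$ and $L^2$), I would check them directly from the Boyd indices. I would also check that the $Z_E^2$ of \cite{JMST} coincides, up to equivalence of norms, with the space defined here; this is why (ii) is proved first. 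With $E\simeq F$ in hand, $F$ is UMD, which is what the subsequent finite-trace application requires.
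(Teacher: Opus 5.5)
Your proposal follows the paper's approach: the isomorphism $E\simeq Z_E^2$ is simply quoted from Johnson--Maurey--Schechtman--Tzafriri via \cite[Theorem~2.f.1(ii)]{LT2}, with the hypothesis supplied by the nontrivial Boyd indices of a UMD space, and the paper gives no further argument. Your checks in (i) and (ii) are correct and supply things the paper asserts without proof: that $F$ is a genuine symmetric Banach function space (your triangle inequality uses full symmetry of $E$, which follows from the Fatou property), and the norm equivalence, which follows from Cauchy--Schwarz together with monotonicity of $g^*$.
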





For a symmetric Banach function space $E$ on $(0,1)$ and a
von Neumann algebra $\M$ with a faithful normal tracial state
$\tau$, we use the notation\cite{LSZ,KS}
\[
 E(\M,\tau)=\{x\in S(\M,\tau):\mu(x)|_{(0,1)}\in E\},
 \qquad
 \norm{x}_{E(\M,\tau)}=\norm{\mu(x)|_{(0,1)}}_E.
\]

Let $F=Z_E^2$ be the space from
Lemma~\ref{lem:finite-interval-extension}; since $F$ is isomorphic to
$E$, it is a UMD space. If $\tau$ is a tracial state, then $\tau(\ind)=1$ and
$\mu(x)$ vanishes on $[1,\infty)$ for every $x\in S(\M,\tau)$; hence
 $E(\M,\tau)$ and $F(\M,\tau)$ coincide. 
Applying
Theorem~\ref{thm:main} to $F$ gives the following result.
\begin{corollary}\label{cor:finite-trace}
Let $E$ be a UMD symmetric Banach function space on $(0,1)$.
For every finite von Neumann algebra $\M$ equipped with a faithful
normal tracial state $\tau$, the space $E(\M,\tau)$ is UMD.
Moreover, for every $1<u<\infty$ there is a constant
$C_{E,u}$ such that
\[
 \beta_{u,E(\M,\tau)}\le C_{E,u}
\]
for all such $(\M,\tau)$.
\end{corollary}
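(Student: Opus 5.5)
The plan is to reduce Corollary~\ref{cor:finite-trace} to Theorem~\ref{thm:main} applied to the extension $F=Z_E^2$ of Lemma~\ref{lem:finite-interval-extension}. First, $F$ is a symmetric Banach function space on $(0,\infty)$: the norm \eqref{eq:finite-extension-norm} depends only on $g^*$, it is monotone in $g^*$, and it contains indicators of sets of finite measure. Second, $F$ is UMD. Lemma~\ref{lem:finite-interval-extension} gives an isomorphism $E\simeq Z_E^2$, and the UMD property is invariant under Banach space isomorphisms, with $\beta_{u,F}\le\|T\|\|T^{-1}\|\beta_{u,E}$. Theorem~\ref{thm:main} then yields a constant $C_{F,u}$ with $\beta_{u,F(\M,\tau)}\le C_{F,u}$ for every semifinite $(\M,\tau)$, in particular for every finite $\M$ with a faithful normal tracial state.

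It remains to identify $E(\M,\tau)$ with $F(\M,\tau)$ when $\tau(\ind)=1$. For $x\in S(\M,\tau)$, the function $\mu(x)$ vanishes on $[1,\infty)$, because $d_x(a)\le\tau(\ind)=1$ for every $a\ge0$. Hence $\mu(x)^*=\mu(x)$, and the second term in \eqref{eq:finite-extension-norm} reduces to $\int_0^1\mu_t(x)\dd t$ (only $n=0$ contributes). So
\[
\norm{x}_{F(\M,\tau)}=\max\Bigl\{\norm{\mu(x)|_{(0,1)}}_E,\ \int_0^1\mu_t(x)\dd t\Bigr\}.
\]
Because $E$ is a Banach function space on the finite interval $(0,1)$, we have $E\subset L^1(0,1)$ continuously, with $\int_0^1|f|\le\norm{\ind_{(0,1)}}_{E'}\norm{f}_E$. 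This gives
\[
\norm{x}_{E(\M,\tau)}\le\norm{x}_{F(\M,\tau)}\le\max\{1,\norm{\ind_{(0,1)}}_{E'}\}\norm{x}_{E(\M,\tau)}.
\]
Thus the two spaces coincide as sets, with equivalent norms and constants depending only on $E$.

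Combining these steps with the isomorphism invariance of $\beta_{u,\cdot}$ gives $\beta_{u,E(\M,\tau)}\le\max\{1,\norm{\ind_{(0,1)}}_{E'}\}\,C_{F,u}$. This bound depends only on $E$ and $u$, which proves the corollary.

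The main obstacle is the second step, namely the nontrivial isomorphism $E\simeq Z_E^2$. It rests on the Johnson--Maurey--Schechtman--Tzafriri result and requires nontrivial Boyd indices, which follow from Theorem~\ref{thm:type-cotype}. Since the statement of Lemma~\ref{lem:finite-interval-extension} already records this isomorphism, I will take it as given rather than reprove it.
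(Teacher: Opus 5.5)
Your proposal is correct and follows the paper's own argument. You apply Theorem~\ref{thm:main} to $F=Z_E^2$, which is UMD because it is isomorphic to $E$, and you identify $E(\M,\tau)$ with $F(\M,\tau)$ using that $\mu(x)$ vanishes on $[1,\infty)$ when $\tau(\ind)=1$. Your explicit formula $\norm{x}_{F(\M,\tau)}=\max\{\norm{\mu(x)|_{(0,1)}}_E,\int_0^1\mu_t(x)\dd t\}$ and the two-sided norm estimate you derive from it make precise what the paper means by ``coincide'', and they track the resulting constant.
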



\end{document}